        \newtheorem{theorem}{Theorem}[section]
\newtheorem{definition}[theorem]{Definition}
        \newtheorem{lemma}[theorem]{Lemma}
        \newtheorem{remark}[theorem]{Remark} 
\numberwithin{equation}{section}
\newcommand \bel {\begin{equation}\label}
\newcommand \ee {\end{equation}}
\newcommand \alphab {\overline\alpha}  
\newcommand \ub {\overline u}
\newcommand \vb {\overline v} 
\newcommand \ubar   	{{\overline u}}
\newcommand \vbar   	{{\overline v}}
\newcommand \wbar   	{{\overline w}}
\newcommand \Kbar {\overline K} 
\newcommand \del		\partial
\newcommand{\auth}{\textsc}
\DeclareMathOperator	\sgn  {sgn} 
 \newcommand \la 		\langle
\newcommand \ra 		\rangle
\newcommand \be   	{\begin{equation}} 
\newcommand \RR      {\mathbb{R}}
\newcommand \eps     \epsilon
\newcommand \Omegabf {\mbox{\boldmath $\Omega$}}
\newcommand \Tau {\mathcal T} 
\def\XXint#1#2#3{{\setbox0=\hbox{$#1{#2#3}{\int}$}
\vcenter{\hbox{$#2#3$}}\kern-.5\wd0}}
\newcommand \CL 		{C^\infty \Lambda} 
\newcommand \DL 		{{\mathcal D} \Lambda} 
\newcommand \Dcal 	{\mathcal D} 
\newcommand \Lloc 	{L^1_{\text{loc}}}
\newcommand \Mring 	{\mathring M} 
\newcommand \Hcal   {\mathcal{H}}
\newcommand \Bcal   {\mathcal{B}}
\newcommand \len    {\lambda_{K,e^0}}
\newcommand \tu     {{\tilde u_{K,e^0}}}
\newcommand \uep    {{u_{e_K^+}}}
\newcommand \Qbf    {Q^{\Omegabf} }
\newcommand \bu     {{\bar u_{K,e^0}}}
\begin{document}

\setcounter{footnote}{-1}

\title{Formulation and convergence of the finite volume method
\\
 for conservation laws on spacetimes with boundary}

\author{
Jan Giesselmann$^1$ and Philippe G. LeFloch$^2$ 
\footnote{
$^1$ Fachbereich Mathematik, TU Darmstadt, Dolivostr. 15, D-64293 Darmstadt, Germany.
\newline 
E-mail: {\tt giesselmann@mathematik.tu-darmstadt.de} 
\newline						
						Laboratoire Jacques-Louis Lions \& Centre National de la Recherche Scientifique,
Sorbonne Universit\'e, 4 Place Jussieu,  75252 Paris, France.
\newline 
E-mail : {\tt contact@philippelefloch.org}
\newline
\textit{\ AMS Subject Class.} {Primary: 35L65. Secondary: 76L05, 76N.} 
\, 
\textit{Key words and phrases.} Hyperbolic conservation law; flux field of differential forms;  spacetime with boundary; global hyperbolicity; entropy solution; finite volume. 
\hfill To appear in: Numerische Mathematik (2020).
}}

\date{Revised version: May 2019}
\maketitle
\begin{abstract}
We study nonlinear hyperbolic conservation laws posed on a differential $(n+1)$-manifold with boundary referred to as a spacetime,
and defined from a prescribed flux field of $n$-forms depending on a parameter
(the unknown variable) ---a class of equations proposed by LeFloch and Okutmustur in 2008. Our main result is a proof of the convergence of the finite volume method for weak solutions satisfying suitable entropy inequalities. 
A main difference with previous work is that we allow for slices with a boundary and, in addition, introduce a new formulation of
the finite volume method involving the notion of total flux functions.   
Under a natural global hyperbolicity condition on the flux field and the spacetime and by assuming that the spacetime admits a foliation by compact slices with boundary, we establish an existence and uniqueness theory for the initial and boundary value problem, and we prove a contraction property in a geometrically natural $L^1$-type distance. 
\end{abstract}


\section{Introduction} 

The mathematical study of weak solutions to hyperbolic conservation laws on curved manifolds was initiated by LeFloch and co-authors about ten years ago \cite{ABL,ALO,BL07,LeFlochOkutmustur2}, and this subject extended in several directions \cite{BL,BLM1,BCHL09,BFL09,Gie09,GM14,KMS15,LM14,LM13}. The main motivation comes from geophysical fluid dynamics (in which the problems are posed on a curved surface such as the sphere) and general relativity (in which the Einstein-Euler equations  are expressed on a manifold whose metric is one of the unknowns).  
The class of hyperbolic conservation laws provides a simplified, yet very challenging, model on which one can develop fundamental techniques and understand aspects of nonlinear wave propagation and shock formation on manifolds.

In the present paper, we built upon the work by LeFloch and Okutmustur \cite{LeFlochOkutmustur2}  and consider conservation laws whose flux is expressed as a family of differential forms. 
Let $M$ be an oriented, smooth $(n+1)$-dimensional manifold endowed with a smooth family of $n$-forms $\omega=\omega(u)$ which we refered to as a {\it flux field}.
The conservation law of interest then reads 
\bel{claw}
 d(\omega(u))=0,
\ee
where $d$ denotes the exterior derivative and in which the unknown $u: M \to \RR$ is a scalar field defined on this manifold.

The study of weak solutions to hyperbolic equations on manifolds was initiated in Ben-Artzi and LeFloch~\cite{BL07} for the class of equations
\bel{Riemann}
 \del_t u + \operatorname{div}_g (f(x,u)) =0, \qquad M= \mathbb{R}_+ \times N, 
\ee
where $N$ is a closed, oriented manifold endowed with a Riemannian metric $g$ and 
$f$ is a prescribed family of flux vector fields on $N$.
In a local coordinate system $(t,x^1,\dots, x^n)$ associated with the basis of tangent vectors $\del_t, \del_{x^1},\dots, \del_{x^n}$, we can express any vector field $f$ by its coefficients $f_i$, i.e.~one sets $f(u)=\sum_i f_i(u) \del_{x^i}$.
The Riemannian metric can be expressed as $g_{ij}:= g(\del_{x^i},\del_{x^j})$, so that 
\[ 
 \operatorname{div}_g f := \frac{1}{\sqrt{|g|}} \del_{x^i} \big( f_i \sqrt{|g|}\big),
\]
where $|g|$ denotes the absolute value of the determinant of $(g_{ij})$.

Observe that we recover the 
following generalization of \eqref{Riemann}
\bel{Riemann2}
 \del_t h(u) + \operatorname{div}_g (f(x,u)) =0, \qquad M= \mathbb{R}_+ \times N, 
\ee
from \eqref{claw} by choosing
\bel{fluxformR} \omega(u)= h(u)  \sqrt{|g|}  dx^1 \wedge \dots \wedge dx^n + \sum_{i=1}^n (-1)^i f_i(u) \sqrt{|g|} dt \wedge dx^1 \wedge \cdots \wedge \widehat{dx^i} \wedge  \dots \wedge dx^n. 
\ee
Hence, the field $\omega$ combines information on the flux field $f$, the conserved variables $h$ and the volume form  $\sqrt{|g|}  dx^1 \wedge \cdots  \wedge dx^n$ induced by $g$. 
In particular, no separate information on the geometric structure of $M$ is required and the knowledge of the flux field is sufficient for writing \eqref{claw}. 
This also enables us to define a suitable notion of hyperbolicty in a unifying way for a large class of problems.
As pointed out in \cite{LeFlochOkutmustur2}, using the general form \eqref{claw} instead of the corresponding formulations for Lorentzian or Riemannian manifolds is not only more general but also
allows one to develop a mathematical theory which is conceptually and technically simpler.
From a conceptual point of view it is desirable to have a unifying theory (as the one presented here) encompassing  conservation laws on (time dependent) Riemannian manifolds, 
conservation laws on Lorentzian manifolds and hyperbolic problems in which time and space derivatives act on non-linear functions of the state variable $u$.

Our main objective is to further extend the results in \cite{LeFlochOkutmustur2} and allow for time slices {\sl with a non-empty boundary}, i.e.,
there is a part of the boundary of $M$ which does not constitute 'initial data' for the Cauchy problem and on which 'boundary conditions' must be prescribed.
Recall that imposing boundary conditions for nonlinear hyperbolic conservation laws is a delicate matter, since the nature of the data to be prescribed depends on the (unknown) solution.
In fact, the boundary condition needs to be included in the very definition of entropy solutions, in a suitably weak sense. This issue was first discussed in \cite{BLN} in the class of functions of bounded total variation and later in
\cite{Szepessy} for measure-valued solutions.
We follow an approach initiated in the Euclidean setting by LeFloch and Dubois \cite{DL88} and later  \cite{CCL94,CCL95,KL}. An alternative approach to boundary conditions for hyperbolic conservation laws goes back to Otto \cite{Otto}.
Recall that the theory of initial value problems for entropy solutions to scalar conservation laws in the Euclidean space goes back to Kruzkov 
\cite{Kruzkov} and DiPerna \cite{DiPerna} and the present work relies on their pioneering contributions.
 We also note that there is only one previous paper on the analyis of hyperbolic conservation laws on manifolds with boundary, \cite{KMS15}, which studied problems posed on stationary Riemannian manifolds.
 Let us mention conservation laws on domains of outer communication of  black holes, \cite{LX16}, as a motivation for the treatment of conservation laws on spacetimes with boundary.

We will introduce here suitable notions of weak solutions and measure-valued solutions (in the sense of DiPerna).  
Our definitions make sense for general flux fields $\omega$ but, in addition, an entropy condition is imposed which singles out the relevant weak solutions. To this end, we impose that the pair $(M,\omega)$ satisfies a ``global hyperbolicity condition'', as we call it, which in particular 
provides a global time orientation and allows us to distinguish between ``future'' and ``past'' directions in the spacetime.

Our main contribution is the formulation and analysis of the finite volume method based on monotone numerical fluxes and on a fully geometric approach to the discretization of \eqref{claw}.
For technical convenience and in order to be able to formulate the method as a time-stepping scheme, we assume that $M$ is foliated by compact slices. We built here on several earlier works. 
 The convergence of finite volume approximations of the initial value problem for hyperbolic conservation laws was established first in the Euclidean case by Cockburn, Coquel, and LeFloch \cite{CCL94,CCL95}
 and later for Riemannian manifolds in \cite{ABL}. Further generalizations were then provided in \cite{LeFlochOkutmustur2} and \cite{Gie09}.

We find here that in the finite volume discretisation of \eqref{claw} it is very natural to view the approximate solution $u^h$ as being defined via {\sl total flux functions} along faces of the triangulation, that is, the quantities denoted below by 
   $q_e := \int_e i^* \omega(u_e),$ where $e$ is a face and $i^*$ is the pull-back operator associated to the inclusion   $i: e \rightarrow M$. 
      For some faces which we call {\em spacelike}, we are able to go back and forth between the total flux $q_e$ and the ``physical value'' $u_e$.
One important contribution in the present paper is that we have eliminated the need to introduce (face size) averages $|e_K^+|$ which were used in LeFloch and Okutmustur's earlier formulation \cite{LeFlochOkutmustur2}.  In the present proposal, we consider it to be natural to use the flux $q_e$ for every face  $e$ while the quantity $u_e$ is introduced for spacelike faces only.

Interestingly enough, the formulation of the finite volume scheme requires {\sl further structure} on the manifold, especially an $n$-form along its boundary. This is necessary in order to determine an averaged value of the data within each boundary cell. It is not expected that stable schemes could be designed that would only require the fluxes across the boundary. 

We will prove several stability results for the proposed scheme with an emphasis on discrete versions of the entropy inequality. These stability estimates are sufficient to show that the sequence of approximate solutions created by the finite volume scheme converges to an {\em entropy measure valued solution} in the sense of DiPerna. By extending DiPerna's uniqueness theorem we show that the  sequence indeed converges to an {\em entropy solution}.
At the same time our analysis implies a natural extension of the $L^1$ contraction property satisfied by  hyperbolic conservation laws in Euclidean space, see \eqref{MTHM.1}.

An outline of this paper is as follows:
In Section 2 we introduce the notions of entropy weak/measure-valued solutions taking into account boundary data. Then, we discuss the concept of global hyperbolicity and gather our main results in Section 3. Section 4 is devoted to the presentation of the finite volume scheme and to the derivation of local stability estimates. In Section 5, we are in a position to prove global stability estimates for the finite volume scheme and to prove convergence of the approximate solutions toward the entropy solution of the Cauchy problem.

      
\section{Conservation laws posed on a spacetime}
\label{deux}
 
\subsection{Weak solutions}
 
In this preliminary section we present, in a slightly modified version, the formulation proposed in LeFloch and Okutmustur 
\cite{LeFlochOkutmustur2}. 
We assume that $M$ is an oriented, smooth $(n+1)$-manifold with smooth boundary $\del M$, which we refer to as 
a {\sl spacetime with boundary.} 
Given an $(n+1)$-form $\alpha$, its {\sl modulus} is defined as the $(n+1)$-form
\begin{equation}\label{def:abs}
|\alpha| : = |\alphab|\, dx^0 \wedge \cdots \wedge dx^n,
\end{equation}
where $\alpha = \alphab \,dx^0 \wedge \cdots \wedge dx^n$ is written in an oriented frame
determined from local coordinates $x=(x^\alpha)=(x^0, \ldots, x^n)$.
If $H$ is a hypersurface, we denote by $i=i_H : H \to M$ the canonical injection map, and
by $i^*=i_H^*$  the pull-back operator acting on differential forms defined on $M$.

We denote by $\CL^k(M)$ the space of all smooth fields of differential forms of degree $k \leq n+1$, 
and by $\DL^k(M) \subset \CL^k(M)$ the subset of compactly supported fields.

\begin{definition}
\label{123}
1. A {\rm flux field} $\omega$ on the $(n+1)$-manifold $M$
is a parameterized family $\omega(\ubar) \in \CL^n(M)$ of smooth fields
of differential forms of degree $n$,
that depends smoothly upon the real parameter $\ubar$.

2. The {\rm conservation law} associated with a flux field $\omega$ and with unknown $ u:  M \to \RR$ reads
\be
\label{LR.1}
d\big(\omega(u)\big)=0,
\ee
where $d$ denotes the exterior derivative operator and, therefore, $d\big(\omega(u)\big)$ is a field
of differential forms of degree $(n+1)$ on $M$.

3. A flux field $\omega= \omega(\ubar) \in \CL^n(M)$ is said to {\rm grow at most linearly}
if there exists some 
$\alpha \in C^\infty \Lambda^n (M) $
such that for every  
hypersurface $\mathcal{H}$ and $\bar u \in \RR$
\be
\label{111}
- i^*_\mathcal{H} \alpha \leq
i^*_\mathcal{H} \del_u \omega(\bar u) \leq 
i^*_\mathcal{H}\alpha.
\ee
\end{definition}

Given a {\sl smooth} solution $u$ of \eqref{LR.1} we can apply Stokes theorem on any open subset $S$
that is compactly included in $M$ and has piecewise smooth boundary $\del S$:
\be
\label{key64}
0 = \int_S d(\omega(u)) = \int_{\del S} i^*(\omega(u)).
\ee
There is a natural orientation on $S$ (as a subset of $M$) and, therefore, on $\del S$ such that \eqref{key64} indeed holds.
Similarly, given any smooth and compactly supported function $\psi \in \Dcal(\Mring) := \DL^0(\Mring)$,
 we can write
$$
d(\psi \, \omega(u)) = d\psi \wedge \omega(u) + \psi \, d(\omega(u)),
$$
where the differential $d\psi$ is a $1$-form field. Thus we find
$$
\int_M d(\psi \, \omega(u)) = \int_M d\psi \wedge \omega(u)
$$
and, by Stokes theorem,
\be
\label{LR.0}
\int_M d\psi\wedge \omega(u)=\int_{\del M}i^*(\psi\omega(u)).
\ee
This identity is satisfied by every smooth solution to \eqref{LR.1} and this motivates us to reformulate
\eqref{LR.1} in a {\it weak} form.

\begin{definition}
1. A function $u: M \to \RR$ is said to be {\rm locally integrable} (respectively {\rm integrable)} 
if it is measurable and for every non-negative, $(n+1)$-form field $\alpha \in \DL^{n+1}(\Mring)$
(resp.~$\alpha \in \CL^{n+1}(M)$)  
 one has 
\begin{equation}\label{def:l1}
\int_M |u| \, \alpha \quad \text{ is finite.} 
\end{equation}
The space of all such functions is denoted by $\Lloc(M)$ (resp.~$L^1(M)$).

2. Given a flux field  $\omega$ with at most linear growth, a function $u \in \Lloc(M)$ 
is called a {\rm weak solution} to the conservation law \eqref{LR.1} posed on $M$
if for every test-function $\psi \in \Dcal(\Mring)$ 
$$
\int_M d\psi\wedge \omega(u) = 0. 
$$
\end{definition}

The above definition makes sense, since \eqref{111} implies that
for every $1$-form field $\rho \in \DL^1(M)$ there exists a non-negative $(n+1)$-form field $\beta \in \DL^{n+1}(M)$ such that 
\be
\label{1111}
\sup_{\ubar \in \RR} \left| \rho \wedge \del_u \omega(\ubar) \right| \leq \beta,
\ee
so that under the conditions in Definition~\ref{123} the integral 
$\int_M d\psi\wedge \omega(u) $ is finite. 

The above definition
can be immediately generalized to functions defined on the boundary and we denote by 
$L^1(\del M)$ and $\Lloc(\del M)$ the space of all integrable (resp. locally integrable) functions defined on the boundary of the manifold. 
Observe that the {\sl $L^1$ norm} of functions in $\Lloc(M)$ or $L^1(M)$ is {\sl not} canonically defined,  since the value of the integral in \eqref{def:l1} depends on the choice of the form field $\alpha$.
On the other hand we will
also use the standard notation $L^1\Lambda^n(\Hcal)$ for the space of all integrable $n$-form fields defined on an 
($n$-dimensional) hypersurface $\Hcal$; it should be observed that when  $\Hcal$ is orientable
the $L^1$ norm of such a field {\sl is} uniquely defined,  since the integral of its absolute value is well-defined, confer \eqref{def:abs} for the definition of absolute value for differential forms.


\subsection{Entropy solutions} 

As is standard for nonlinear hyperbolic problems,
weak solutions must be further constrained by imposing initial, boundary, as well as entropy conditions, 
which we now discuss.

\begin{definition}
\label{key63}
A field of $n$-forms $\Omega=\Omega(\ubar) \in \CL^n(M)$ depending Lipschitz continuously on $\ubar$
is called a {\rm (convex) entropy flux field} for the conservation law \eqref{LR.1} if there exists
a (convex) Lipschitz continuous function $U: \RR \to \RR$ such that
$$
\Omega(\ubar) = \int_0^\ubar \del_u U (\vb) \, \del_u \omega(\vb) \, d\vb, \qquad \ubar \in \RR.
$$
It is said to be  {\rm admissible} if, moreover, $\sup | \del_u U | < \infty$, and the pair $(U,\Omega)$ is 
called an (admissible, convex) {\rm entropy pair.}
\end{definition}

For instance, if one picks up the family of functions $U(\ub, \vb):=|\ub - \vb|$, where $\vb$
is a real parameter, the corresponding family of entropy flux fields reads
\be
\label{KRZ}
\Omegabf(\ub, \vb) := \sgn(\vb - \ub) \, ( \omega(\vb) - \omega(\ub)),
\ee
which provides us with a natural generalization to spacetimes of Kruzkov's entropy pairs.

Based on the notion of entropy flux above, we can derive entropy inequalities in the following way.
Given any smooth solution $u$ and multiplying \eqref{LR.1} by $\del_u U(u)$
we obtain the balance law
$$
 d( \Omega(u) ) -(d\Omega)(u) + \del_u U(u) (d\omega)(u) 
 =\del_u U(u) d(\omega(u)) = 0.
$$
However, for discontinuous solutions this identity can not be satisfied as an equality and, instead,
we impose the {\it entropy inequalities}
\be
\label{LR.1i}
d( \Omega(u)) - (d\Omega)(u) + \del_u U (u) (d\omega)(u) \leq 0
\ee
 in the sense of distributions for all admissible entropy pairs $(U,\Omega)$.
These inequalities can be justified, for instance, via the vanishing viscosity method, that is
by searching for weak solutions that are realizable as limits of smooth solutions to a parabolic
regularization of \eqref{LR.1}.

To prescribe initial and boundary conditions, we observe that, without further assumptions
on the flux field (yet to be imposed shortly below), points along the boundary
$\del M$ can not be distinguished, and it is natural to prescribe the trace of the solution
along the {\sl whole} of the boundary $\del M$. This is possible provided
the boundary condition
\be
\label{LR.2i} 
u|_{\del M} = u_B
\ee
associated with some data $u_B: \del M \to \RR$,
is understood in a sufficiently {\sl weak} sense, as now defined.

\begin{definition}
\label{key92}
Let $\omega=\omega(\ub)$ be a flux field with at most linear growth and
let $u_B \in L^1(\del M)$ be a prescribed boundary function. 
Then, a function $u \in \Lloc(M)$  
   is called an {\rm entropy solution} to the boundary value problem 
   determined by the conservation law $\eqref{LR.1}$
and the boundary condition $\eqref{LR.2i}$
if there exists a field of $n$-forms $\gamma \in \Lloc\Lambda^n(\del M)$
such that
$$
\aligned
&\int_M \Big(  d \psi \wedge \Omega(u)
+ \psi \, (d  \Omega) (u) - \psi \, \del_u U(u) (d \omega) (u) \Big)
\\
& - \int_{\del M} \psi_{|\del M} \, \big(  i^*\Omega(u_B) + \del_u U(u_B)  \big(\gamma - i^*\omega(u_B) \big) \big)
\,  \geq 0
\endaligned
$$
for every admissible convex entropy pair $(U,\Omega)$ and every function $\psi \in \Dcal(M)$.
\end{definition}

In the above definition, all integrals under consideration are finite, in particular the one involving the entropy flux
since $u \in \Lloc(M)$ and any admissible entropy flux also satisfies the condition \eqref{1111}.

The above definition can be generalized to encompass solutions within the much larger class of
measure-valued mappings. Indeed, following DiPerna \cite{DiPerna}, we consider solutions that are no longer functions but
{\sl locally integrable Young measures,} i.e.,
weakly measurable maps $\nu: M \to \text{Prob}(\RR)$ taking values
within  the set of probability measures $\text{Prob}(\RR)$ and such that 
$$
\int_M \la \nu, |\cdot| \ra \, \alpha 
$$
is finite for every $(n+1)$-form field $\alpha \in \DL^{n+1}(\Mring)$. 

\begin{definition}
\label{LR.4}
Given a flux field $\omega=\omega(\ub)$ with at most linear growth and given
a boundary function $u_B \in L^1(\del M)$, a locally integrable Young measure $\nu: M \to \text{Prob}(\RR)$ is called 
an {\rm entropy measure-valued solution} to the boundary value problem \eqref{LR.1}, \eqref{LR.2i} 
if there exists a boundary field $\gamma \in \Lloc \Lambda^n(\del M)$
such that the entropy inequalities
$$
\aligned
& \int_M  \Big\la \nu,  d \psi \wedge \Omega(\cdot)  +  \psi \, \big((d  \Omega) (\cdot)
- \del_u U(\cdot) (d \omega) (\cdot)\big) \Big\ra
\\
& - \int_{\del M} \psi_{|\del M} 
\Big( i^*\Omega(u_B) + \del_u U(u_B)  \big(\gamma - i^*\omega(u_B)\big)\Big)  
\,  \geq 0
\endaligned
$$
are satisfied for every admissible convex entropy pair $(U,\Omega)$ and every function $\psi \in \Dcal(M)$.
\end{definition}

Finally, we introduce a geometric compatibility condition which is quite natural (and will simplify some of the follow-up statements), 
since it ensures that constants are trivial solutions to the conservation law ---This is a property shared by many models of fluid dynamics 
such as the shallow water equations on a curved manifold.

\begin{definition}
A flux field $\omega$ is called {\rm geometry-compatible} if it is closed for each value of the parameter,
\be
\label{LR.3}
( d \omega ) (\ubar)=0, \qquad \ubar \in \RR.
\ee
\end{definition}

When \eqref{LR.3} holds, then it follows from Definition~\ref{key63} that
every entropy flux field $\Omega$ also satisfies the geometric compatibility condition
\be
\label{dOmega}
(d\Omega) (\ubar) =0, \qquad \ubar \in \RR.
\ee
In turn, the entropy inequalities \eqref{LR.1i} satisfied by an entropy solution $u : M \to \RR$ simplify drastically,
and take the form
\be
\label{LR.1i-simple}
d( \Omega(u)) \leq 0.
\ee

The geometry compatibility condition \eqref{LR.3} ensures that constant states are trivial solutions of \eqref{claw}. In case of conservation laws on Riemannian manifolds, confer \eqref{Riemann}, this means nothing but
$\operatorname{div}_g f(\cdot, \bar u)=0$ for all $\bar u \in \RR.$  A large number of geometry compatible fluxes on the sphere can be found in \cite{BLM1}. 
The reader may also recall that the scalar problems considered here are simplified model problems for compressible fluid flows on space times, which naturally satisfy a version of the geometry compatibility condition, see
\cite{LM14} for details.


\section{Well-posedness theory} 

\subsection{Global hyperbolicity}

In general relativity, it is a standard assumption that the spacetime should be globally hyperbolic.
This notion must be adapted to the present setting, since we do not have a Lorentzian structure, but solely the $n$-volume form structure associated with the flux field $\omega$.

From now on a flux field $\omega= \omega(\ubar)$ is fixed. 
The following definition imposes a non-degeneracy condition on that flux, which will be assumed from now.

\begin{definition}
\label{hyperb-def}
The flux field $\omega= \omega(\ubar)$ of the conservation law \eqref{LR.1} on the manifold $M$ is said to satisfy the
{\rm global hyperbolicity condition} if there exists a $1$-form field $T \in \CL^1(M)$ called a {\rm field of observers}
such that 
\be
\label{hyperb} T \wedge \del_u \omega(\ubar) > 0 \qquad \ubar \in \RR.  
\ee 
A hypersurface $\Hcal$ is called {\rm spacelike} if  
for every normal $1$-form field $N$ 
\be
\label{297}
N \wedge \del_u \omega(\ubar) \neq 0, \qquad \ubar \in \RR,
\ee
and one fixes the orientation on $\Hcal$ so that  
\be
\label{orie}
i^*_\Hcal \del_u \omega(\ubar) > 0, \qquad \ubar \in \RR. 
\ee
\end{definition}

To every hypersurface $\Hcal$ and entropy flux $\Omega$ we can associate the function 
$$ 
\aligned
q^\Omega_\Hcal : \quad & \RR \to \RR,
\qquad 
 \ubar \mapsto \int_\Hcal i^*\Omega(\ubar),  
\endaligned
$$
which represents the {\sl total entropy flux} along the hypersurface $\Hcal$. When $\Omega=\omega$, we also use the short-hand notation 
$q_\Hcal := q^\omega_\Hcal$.

\begin{lemma}
\label{convexities}

If $\Hcal$ is a spacelike hypersurface, then the total flux function $q_\Hcal$ is strictly monotone and, therefore,
one-to-one on its image. 

The orientation on $\Hcal$ implies the following positivity property of Kruzkov's entropy flux fields
\be
\label{333}
i^*_\Hcal \Omegabf(\ubar,\vbar) \geq 0, \qquad \ubar, \vbar \in \RR. 
\ee
Moreover, the total entropy flux functions $q^\Omega_\Hcal$ satisfy the identity 
\be
\label{444}
\del_q \left(q^\Omega_\Hcal \circ q_\Hcal^{-1} \right)
= 
\del_u U \circ q_\Hcal^{-1}.
\ee
\end{lemma}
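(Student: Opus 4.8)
The plan is to prove the three assertions in order, drawing on the global hyperbolicity set-up. For the first claim, recall that by \eqref{orie} the orientation on a spacelike $\Hcal$ is fixed so that $i^*_\Hcal \del_u \omega(\ubar) > 0$ as an $n$-form on $\Hcal$ for every $\ubar \in \RR$. Since $q_\Hcal(\ubar) = \int_\Hcal i^*\omega(\ubar)$ and the integrand depends smoothly on $\ubar$, we may differentiate under the integral sign to get $q_\Hcal'(\ubar) = \int_\Hcal i^*_\Hcal \del_u\omega(\ubar)$, and this integral of a positive $n$-form is strictly positive (using that $\Hcal$ has positive measure; if $\Hcal$ is noncompact one works locally and sums). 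Hence $q_\Hcal$ is strictly increasing, so it is one-to-one onto its image, which makes $q_\Hcal^{-1}$ well defined and $C^1$ on that image by the inverse function theorem.

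For the positivity \eqref{333} of Kruzkov's entropy flux fields: from \eqref{KRZ} we have $\Omegabf(\ubar,\vbar) = \sgn(\vbar - \ubar)(\omega(\vbar) - \omega(\ubar))$, so pulling back to $\Hcal$ and writing $\omega(\vbar) - \omega(\ubar) = \int_{\ubar}^{\vbar} \del_u\omega(w)\,dw$, we get $i^*_\Hcal\Omegabf(\ubar,\vbar) = \sgn(\vbar-\ubar)\int_{\ubar}^{\vbar} i^*_\Hcal\del_u\omega(w)\,dw$. When $\vbar \geq \ubar$ the factor $\sgn(\vbar - \ubar) \geq 0$ and the integral $\int_{\ubar}^{\vbar} i^*_\Hcal\del_u\omega(w)\,dw$ is an integral of a positive $n$-form over an interval of nonnegative length, hence a nonnegative $n$-form on $\Hcal$; when $\vbar < \ubar$ both the sign factor and the (reversed) integral change sign, so the product is again a nonnegative $n$-form. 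This gives \eqref{333}.

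For the identity \eqref{444}: set $q = q_\Hcal(\ubar)$, i.e.\ $\ubar = q_\Hcal^{-1}(q)$. By definition of the entropy flux field in Definition~\ref{key63}, $\Omega(\ubar) = \int_0^{\ubar}\del_u U(\vbar)\,\del_u\omega(\vbar)\,d\vbar$, so $q^\Omega_\Hcal(\ubar) = \int_\Hcal i^*\Omega(\ubar) = \int_0^{\ubar}\del_u U(\vbar)\big(\int_\Hcal i^*_\Hcal\del_u\omega(\vbar)\big)\,d\vbar = \int_0^{\ubar}\del_u U(\vbar)\,q_\Hcal'(\vbar)\,d\vbar$, after swapping the order of integration (justified since $i^*_\Hcal\del_u\omega$ is integrable and $\del_u U$ is bounded). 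Differentiating in $\ubar$ yields $\del_\ubar q^\Omega_\Hcal(\ubar) = \del_u U(\ubar)\,q_\Hcal'(\ubar)$. Now apply the chain rule to the composition $q \mapsto q^\Omega_\Hcal(q_\Hcal^{-1}(q))$: its derivative is $\del_\ubar q^\Omega_\Hcal(q_\Hcal^{-1}(q)) \cdot (q_\Hcal^{-1})'(q) = \del_u U(q_\Hcal^{-1}(q))\, q_\Hcal'(q_\Hcal^{-1}(q)) \cdot \big(q_\Hcal'(q_\Hcal^{-1}(q))\big)^{-1} = \del_u U(q_\Hcal^{-1}(q))$, which is exactly \eqref{444}.

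The main subtlety, and the only place requiring care rather than routine computation, is the interchange of integration over $\Hcal$ with integration/differentiation in the parameter $\ubar$, and — if $\Hcal$ is noncompact — the assertion that a pointwise-positive $n$-form has strictly positive integral. Both are handled using the linear growth bound \eqref{111} (which gives an integrable dominating $n$-form $\alpha$ uniformly in $\ubar$), together with the smooth dependence of $\omega$ on $\ubar$; the strict positivity of the integral follows because $i^*_\Hcal\del_u\omega(\ubar)$ is a nowhere-vanishing $n$-form of a fixed sign, so its integral over any set of positive measure cannot vanish. A minor additional point is that $U$ is only assumed Lipschitz, so $\del_u U$ is merely $L^\infty$ and the identities \eqref{444} should be read almost everywhere in $q$; this does not affect the argument.
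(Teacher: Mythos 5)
Your proposal is correct, and for the second and third assertions it follows the paper's argument essentially verbatim (the Kruzkov flux written as a signed integral of $\del_u\omega$, and the chain-rule computation that the paper dismisses as ``obvious from the definitions''). The one place where you diverge is the strict monotonicity of $q_\Hcal$: you invoke the orientation normalization \eqref{orie} directly and integrate the positive $n$-form $i^*_\Hcal\del_u\omega(\ubar)$, whereas the paper works from the spacelike condition \eqref{297} alone, choosing a transversal vector field $X$ with $\la N,X\ra>0$, completing it to a frame $(X,e_1,\dots,e_n)$, and showing that $(\del_u\omega(\ubar))(e_1,\dots,e_n)$ keeps a constant sign in $\ubar$ and along $\Hcal$. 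That frame argument is precisely what guarantees that a single orientation can be fixed so that \eqref{orie} holds \emph{simultaneously for all} $\ubar$; since Definition~\ref{hyperb-def} phrases \eqref{orie} as part of the convention for spacelike hypersurfaces, your shortcut is a legitimate reading of the statement, but it presupposes the sign-consistency that constitutes the only real content of the paper's proof of this part, so it buys brevity at the cost of hiding where \eqref{297} is actually used. Your side remarks (dominated convergence via \eqref{111}, the a.e.\ reading of \eqref{444} for Lipschitz $U$, and the noncompact case) are harmless; note that in the paper's setting the slices are compact, so the finiteness and strict positivity of $\int_\Hcal i^*\del_u\omega(\ubar)$ are immediate.
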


\begin{proof} Since $\Hcal$ is spacelike, \eqref{297} holds. We introduce a vector field $X$ along $\Hcal$ (not tangential to the hypersurface) 
such that $\la N, X \ra >0$. Then, at each point of $\Hcal$ we can supplement this vector with $n$ vectors tangent to $\Hcal$  
so that $(X, e_1, \ldots, e_n)$ is a positively-oriented (say) basis. 
The main point is to make a continuous selection along $\Hcal$, the specific sign being irrelevant for our present argument. 
Hence, by \eqref{297} one has 
$(N \wedge \del_u \omega(u))(X, e_1, \ldots, e_n)$ keeps a constant sign for all $u$. Since $N$ is a normal form we 
have 
$$
(N \wedge \del_u \omega(u))(X, e_1, \ldots, e_n) = \la N, X \ra \, (\del_u \omega(u))(e_1, \ldots, e_n), 
$$
so that $(\del_u \omega(u))(e_1, \ldots, e_n)$ also keeps a constant sign for all $u$. This shows that the function
$
\ubar \mapsto \int_\Hcal i^*\del_u \omega(\ubar),  
$
never vanishes. Hence, the flux $q_\Hcal$ is a strictly monotone function. 

From the definitions, we can compute 
$$
i^*_\Hcal \Omegabf(\ubar, \vbar) = \sgn(\vbar - \ubar) \, i^*_\Hcal \int_\ubar^\vbar   \del_u \omega(\wbar) \, d\wbar \geq 0,
$$
which is \eqref{333}. 
On the other hand, the proof of the last identity \eqref{444} is obvious from the definitions. 
\end{proof}


\subsection{Existence and uniqueness results}
\label{wellp}

The given observer $T$ canonically determines the following notion of causality. 

\begin{definition}
Given two hypersurfaces $\Hcal, \Hcal' \subset M$ such that $\del \Hcal, \del \Hcal' \subset \del M$, one says that
$\Hcal'$ {\rm lies in the future of} $\Hcal$ and one writes $\Hcal \prec \Hcal'$
if there exists a smooth and one-to-one mapping
$F: \Hcal \times [0,1] \to M$ 
such that
\bel{eq:fhs}
\aligned
& F(\Hcal \times \{0\}) =\Hcal, \qquad F(\Hcal \times \{1\}) = \Hcal', 
\\
& \la T, DF(\del_s ) \ra >0, \qquad F(\del \Hcal \times [0,1]) \subset \del M,
\endaligned
\ee
where $DF$ denotes the tangent map and $\del_s = \del/\del s$ denotes the coordinate tangent vector corresponding to $[0,1]$.
The set $\Bcal(\Hcal,\Hcal'): = F(\del \Hcal \times [0,1]) \subset \del M$
is called the {\sl boundary of the region} limited by $\Hcal$ and $\Hcal'$. 
\end{definition}

To state how the boundary conditions are assumed we need the following definition.

\begin{definition}
\label{679}
Given any open subset $S \subset M$ with piecewise smooth boundary, 
a smooth $1$-form field $N$  defined on a smooth manifold $\Hcal \subset \del S$ is called {\rm outward pointing with respect to $S$} 
if $\la N, X \ra > 0$ on $\Hcal$ for all tangent vectors $X \in T_pM\setminus\{0\}$ associated with curves {\rm leaving} $S$.
One calls $\Hcal$ an {\rm outflow boundary of $S$} (respectively an {\rm inflow boundary of $S$})
if the hypersurface $\Hcal$ is spacelike and any outward pointing $1$-form field $N$ on $\Hcal$ satisfies the sign condition 
$N \wedge \del_u \omega(\ubar) > 0$ (resp.~$ N \wedge \del_u \omega(\ubar) < 0$) for all $\ubar$.
\end{definition}

In the following, we require that the inflow boundary of the spacetime $(\del M)^-$ is non-empty, as this ensures
that the boundary data will be assumed in a strong sense on an open subset of the boundary, at least. 
We will give an example  later (Example 1 in the appendix) which shows that $(\del M)^- \not= \emptyset$ need not be a consequence of the hyperbolicity condition. 

From now on we assume the existence of a foliation by spacelike hypersurfaces of $M$, i.e. 
\be
\label{foliation}
M= \bigcup_{0\leq t \leq T} \Hcal_t,
\ee
where each slice $\Hcal_t$ is a spacelike hypersurface and has the topology of a smooth $n$-manifold $N$ with boundary.
Furthermore we impose that $\Hcal_0$ is an inflow boundary of $M$.
By the foliation we have $M=[0,T] \times N$ topologically and a decomposition of the boundary is induced: 
\be 
\aligned
& \del M = \Hcal_0 \cup \Hcal_T \cup \del^0M,
\qquad
\quad
 \del^0M = \bigcup_{0\leq t \leq T} \del  \Hcal_t. 
\endaligned
\ee

The existence of a foliation by spacelike hypersurfaces is non-trivial, but it is satisfied in many examples of practical interest, e.g., fluid flows on Schwarzschild spacetimes, see \cite[Sec 2.1]{LM14}, for details.

Our main theory of existence, uniqueness, and stability is as follows. 
Observe that the  $L^1$ 
stability property is fully geometric in nature, in that it is stated for any two hypersurfaces such that one lies in the future of the other.

\begin{theorem}[Well-posedness theory for conservation laws on a spacetime]
\label{main}
Let $M$ be an $(n+1)$-dimensional spacetime with boundary and $\omega=\omega(\ubar)$ 
be a geometry-compatible flux field on $M$ growing at most linearly, 
and assume that the global hyperbolicity condition \eqref{hyperb} holds,  
the inflow boundary $(\del M)^-$ is non-empty and $M$ admits a foliation. 
Given boundary data $u_B \in L^\infty(\partial M)$  on $\del M$,
the boundary value problem determined by the conservation law \eqref{LR.1} and the boundary condition \eqref{LR.2i} 
admits
a unique entropy solution $u \in \Lloc(M)$ which, moreover, has well-defined $L^1$ traces on any
spacelike hypersurface.
These solutions determine a (Lipschitz continuous) contracting semi-group in the sense that 
for any two hypersurfaces $\Hcal \prec \Hcal'$ and any Kruzkov entropy $\Omegabf$
and 
any  $A_B \in L^\infty \Lambda^n (\del M)$ such that
\[ \big|\del_ u \omega |_{\del M}\big| \leq A_B \text{ in } \del M \times \mathbb{R}\]
the following inequality holds
\be
\label{MTHM.1}
\int_{\Hcal'} i_{\Hcal'}^* \Omegabf\big( u_{\Hcal'}, v_{\Hcal'} \big)
\leq \int_{\Hcal} i^*_\Hcal \Omegabf\big( u_\Hcal,v_\Hcal \big)
+ \int_{\Bcal} \big| u_B - v_B  \big| A_B, 
\ee
where $\Bcal:=\Bcal(\Hcal,\Hcal')$ is the boundary between $\Hcal$ and $\Hcal'$.
Moreover, the boundary data $u_B$ along any (inflow with respect to $M$) spacelike parts $\Hcal_B \subset \del M$ 
is assumed in the strong sense
\be
\label{MTHM.23}
\lim_{\Hcal \to \Hcal_B} \int_{\Hcal} i^*_{\Hcal} \Omegabf\big( u_{\Hcal}, v_{\Hcal} \big) 
= \int_{\Hcal_B} i^*_{\Hcal_B} \Omegabf(u_B,v_B),
\ee
where $\Hcal$ is a sequence of hypersurfaces approaching the boundary $\Hcal_B$ in a sufficiently strong topology.
\end{theorem}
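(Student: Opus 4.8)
The plan is to obtain Theorem~\ref{main} as the end product of the finite volume construction announced in the introduction, in three large stages: (i) pass from the scheme to an entropy measure-valued solution; (ii) upgrade the measure-valued solution to an honest entropy solution via a DiPerna-type uniqueness argument, which simultaneously yields uniqueness and the trace statement; (iii) establish the geometric $L^1$-contraction \eqref{MTHM.1} and the strong assumption of the boundary data \eqref{MTHM.23} by a Kruzkov doubling-of-variables argument carried out directly with the Kruzkov entropy flux fields $\Omegabf$. Throughout, the geometry-compatibility hypothesis is used to reduce the entropy inequality \eqref{LR.1i} to its simple form \eqref{LR.1i-simple}, so that all the entropy balance terms with $(d\omega)(u)$ and $(d\Omega)(u)$ drop out; this is what makes the computations below clean.

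For existence, I would first verify that under the stated assumptions (global hyperbolicity \eqref{hyperb}, at most linear growth \eqref{111}, a foliation \eqref{foliation} by compact spacelike slices, and $\Hcal_0$ an inflow boundary) the finite volume scheme of Section~4 is well defined: on spacelike faces the total flux $q_e$ determines the physical value $u_e$ by Lemma~\ref{convexities} (strict monotonicity of $q_\Hcal$), and the extra $n$-form along $\del M$ supplies the boundary cell averages. Using a monotone numerical flux and the CFL-type condition, one derives, as in \cite{CCL95,ABL,LeFlochOkutmustur2}, (a) an $L^\infty$ bound on $u^h$ propagated from $\|u_B\|_{L^\infty}$, (b) a discrete entropy inequality for every Kruzkov pair, and (c) a weak-BV / entropy-dissipation estimate controlling the numerical entropy production. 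Passing $h\to 0$, the Young measures generated by $u^h$ converge (along a subsequence) to a locally integrable Young measure $\nu$, and the discrete entropy inequalities pass to the limit to give exactly the inequality in Definition~\ref{LR.4}, with a boundary field $\gamma$ obtained as the weak limit of the boundary fluxes; hence $\nu$ is an entropy measure-valued solution with the prescribed $u_B$.

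Next, I would prove the uniqueness/reduction theorem: any entropy measure-valued solution $\nu$ with bounded support and inflow boundary data $u_B$ is a Dirac mass $\nu_x=\delta_{u(x)}$ for some $u\in L^\infty(M)$ which is the entropy solution, and moreover any two entropy solutions coincide. This is the DiPerna argument transported to the spacetime: one tests the measure-valued entropy inequality for $\nu$ against the Kruzkov family with a second measure-valued solution (or constant, or the second solution's traces) and integrates; the positivity \eqref{333} of $i^*_\Hcal\Omegabf$ on spacelike slices, together with identity \eqref{444}, controls the interior terms, while the inflow sign condition in Definition~\ref{679} forces the boundary term to have the right sign so that the boundary data is actually attained. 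Because each slice $\Hcal_t$ is spacelike and compact, the total Kruzkov flux $\int_{\Hcal_t} i^*_{\Hcal_t}\Omegabf(u_{\Hcal_t},v_{\Hcal_t})$ is a well-defined nonnegative quantity; a Gronwall argument in $t$ along the foliation shows it vanishes when the data agree, giving both that $\nu$ is Dirac and that $L^1$-type traces exist on every spacelike hypersurface (not only the leaves), since any spacelike $\Hcal$ can be locally straightened into a leaf of a foliation. Running the same computation with two genuinely different data $u_B,v_B$ and keeping the boundary term yields \eqref{MTHM.1}, where $A_B$ bounds $|\del_u\omega|$ on $\del M$ and $\Bcal(\Hcal,\Hcal')$ is the region boundary from the causality definition; the passage from two arbitrary hypersurfaces with $\Hcal\prec\Hcal'$ to the estimate uses the mapping $F$ of \eqref{eq:fhs} to interpolate by a local foliation between them. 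Finally \eqref{MTHM.23} is the statement that, on the spacelike inflow part $\Hcal_B\subset\del M$, the trace of the solution converges strongly to $u_B$; this follows from the sign of the boundary term for inflow faces, which shows the boundary entropy defect must vanish in the limit $\Hcal\to\Hcal_B$.

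The main obstacle I expect is the boundary analysis: unlike the closed-slice case of \cite{LeFlochOkutmustur2}, here one must keep careful track of the boundary term $\int_{\del M}\psi_{|\del M}\,(i^*\Omega(u_B)+\del_uU(u_B)(\gamma-i^*\omega(u_B)))$ through the doubling-of-variables computation and show (a) that the ``boundary entropy flux'' defined through $\gamma$ is consistent, in the sense of Dubois--LeFloch \cite{DL88}, with the prescribed $u_B$, and (b) that on the inflow part the possible boundary layer is ruled out so the data is taken strongly, while on the rest of $\del^0M$ only the weak (entropy) boundary condition survives and still closes the estimate. Getting the signs right in the spacetime setting, where ``normal'' and ``outward pointing'' are measured through $\del_u\omega$ rather than a metric (Definition~\ref{679}), and checking that the weak-BV estimate from the scheme is strong enough to pass the boundary flux terms to the limit, is the technical heart of the argument; everything else is a geometric rewriting of the by-now classical Euclidean and Riemannian proofs.
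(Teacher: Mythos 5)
Your proposal follows essentially the same route as the paper: convergence of the finite volume scheme to an entropy measure-valued solution, a DiPerna-type reduction yielding existence, uniqueness and traces, and then a Kruzkov doubling-of-variables argument combined with a boundary trace inequality and test functions adapted to the map $F$ of \eqref{eq:fhs} to obtain \eqref{MTHM.1} and \eqref{MTHM.23}. The only parts of the paper's proof you leave implicit are the explicit construction of a triangulation family satisfying \eqref{curvcond}--\eqref{trichange} (product-in-time meshes built from Karcher simplexes), the identification of the boundary field $\gamma=\omega(u)$ by testing with the linear entropies $U(u)=\pm u$, and the limiting argument showing that the hypersurface integrals over $\Hcal$ and $\Hcal'$ are well defined as $\varepsilon\to 0$.
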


More generally, one can also express the contraction property within an arbitrary open subset with smooth boundary: 
similarly to \eqref{MTHM.1}, the total flux over the {\sl outflow} part of the boundary of this subset 
is controlled by the total flux over the remaining part. 
We can also extend a result originally established by DiPerna \cite{DiPerna}
(for conservation laws posed on the Euclidean space)
within the broad class of entropy measure-valued solutions.

\begin{theorem}[Uniqueness of measure-valued solutions for conservation laws on spacetimes] 
\label{MTHM.2}
Let $\omega$ be a geometry-compatible flux field on a spacetime $M$ 
growing at most linearly and satisfying the global hyperbolicity condition \eqref{hyperb}.
Then, any locally integrable entropy measure-valued solution $\nu$ (see Definition \ref{LR.4}) 
to the initial value problem \eqref{LR.1}, \eqref{LR.2i} reduces to a Dirac mass at each point, more precisely
\be
\nu= \delta_u,
\ee
where $u \in \Lloc(M)$ is the unique entropy solution to the same problem.
\end{theorem}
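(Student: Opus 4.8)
The plan is to adapt DiPerna's averaging/uniqueness argument to the geometric setting, using the Kruzkov-type entropy flux fields $\Omegabf$ from \eqref{KRZ} together with the contraction machinery already developed for genuine entropy solutions. First I would recall (or establish) that an entropy solution $u$ in the sense of Definition \ref{key92} exists, by Theorem \ref{main}; the goal is then to show that any entropy measure-valued solution $\nu$ coincides with $\delta_u$. The standard route is to ``double the variables'': one works on the product manifold $M \times M$ (or, more precisely, one compares $\nu$ at a point $p$ with the exact solution $u$ evaluated at $p$), choosing in Definition \ref{LR.4} the admissible convex entropy $U(\cdot, v) = |\cdot - v|$ with $v$ ranging over $\RR$, and integrating against the Young measure in the $v$-slot as well. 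Because the flux is geometry-compatible, the entropy inequalities collapse to the simple form $d(\Omega(u)) \le 0$ (see \eqref{LR.1i-simple}), and likewise $\la \nu, d\psi \wedge \Omegabf(\cdot, v)\ra$ plus the boundary term is nonnegative; this is precisely the structure that makes the Kruzkov comparison go through.

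The key steps, in order, would be: (i) write the measure-valued entropy inequality for $\nu$ against the Kruzkov family, with test function $\psi \ge 0$, and simultaneously write the entropy inequality for the exact entropy solution $u$ against the same family with the roles of the arguments swapped; (ii) integrate the first inequality $d\langle\nu_p, |\cdot - u(p)|\rangle \le 0$-type relation over $v$ with respect to a suitable combination, add the two, and use the elementary pointwise identities for $\sgn$ and $|\cdot|$ (these are the geometric analogues of Kruzkov's inequalities, already encoded in Lemma \ref{convexities}, in particular the positivity \eqref{333} of $i^*_\Hcal \Omegabf$ on spacelike hypersurfaces and the monotonicity of $q_\Hcal$); (iii) deduce that the $n$-form $\langle \nu, \Omegabf(\cdot, u(\cdot))\rangle$, integrated over the spacelike slices $\Hcal_t$ of the foliation \eqref{foliation}, is nonincreasing in $t$, with the boundary contribution along $\del^0 M$ controlled by the (vanishing) mismatch of boundary data since both $\nu$ and $u$ take the same boundary data $u_B$; (iv) observe that at $t=0$ the slice $\Hcal_0$ is the inflow boundary where the data is assumed strongly, so the initial value of this quantity is zero; (v) conclude that $\int_{\Hcal_t} \langle \nu, i^*_{\Hcal_t}\Omegabf(\cdot, u(\cdot))\rangle = 0$ for a.e.\ $t$, and since the integrand is pointwise nonnegative by \eqref{333}, that $\langle \nu_p, |\cdot - u(p)| \rangle = 0$ for a.e.\ $p$, whence $\nu_p = \delta_{u(p)}$.

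The main obstacle will be step (iii)--(iv): controlling the boundary terms and justifying that the ``total Kruzkov flux'' $\int_{\Hcal_t}\langle\nu, i^*_{\Hcal_t}\Omegabf(\cdot, u_{\Hcal_t})\rangle$ is well-defined and monotone along the foliation. This requires that both $\nu$ and $u$ possess well-defined $L^1$ traces on the spacelike slices (for $u$ this is part of Theorem \ref{main}; for $\nu$ one needs a trace lemma for measure-valued solutions, obtained by testing with functions $\psi$ that concentrate near a slice and using the at-most-linear-growth bound \eqref{1111} for equi-integrability), and a careful bookkeeping of the orientation conventions so that the ``future'' slice enters with the correct sign — exactly the structure displayed in \eqref{MTHM.1}. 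A secondary technical point is the approximation argument needed to pass from smooth compactly supported $\psi \in \Dcal(M)$ to the characteristic-function-like weights selecting a spacetime slab $\bigcup_{t \in [t_1,t_2]} \Hcal_t$; this is routine mollification but must respect the boundary $\del^0 M$. Once the trace theory and the monotonicity identity are in place, the contraction estimate of Theorem \ref{main} applied with $v$ replaced by the (constant-in-$M$, parameter) values of the Young measure, combined with the pointwise nonnegativity \eqref{333}, forces $\nu$ to be Dirac, and its barycenter is then necessarily the unique entropy solution $u$.
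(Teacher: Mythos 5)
First, a point of reference: the paper does not actually write out a proof of Theorem \ref{MTHM.2}; it is explicitly omitted, with the remark that it ``follows along the same lines as in the Riemannian setting treated in \cite{BL07}, once charts are chosen which are compatible with the foliation.'' So your sketch can only be measured against that intended DiPerna-type argument, and in spirit (Kruzkov entropy flux fields \eqref{KRZ}, doubling of variables, monotonicity of the total Kruzkov flux along the spacelike slices via \eqref{333} and Lemma \ref{convexities}, strong attainment of the data on the inflow boundary) you are in the right family of ideas.

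However, as written your argument is circular within the logical architecture of this paper. You start by invoking Theorem \ref{main} to obtain the entropy solution $u$, and later you also use its trace statement and the contraction property \eqref{MTHM.1}. But here Theorem \ref{main} is proved \emph{by means of} Theorem \ref{MTHM.2}: existence of entropy solutions comes from convergence of the finite volume scheme, and the paper states that Theorem \ref{conv} ``follows from Theorem \ref{MTHM.2} and Lemma \ref{lemma:COP-1}''; likewise the traces and \eqref{MTHM.1} are only established in the proof of Theorem \ref{main} at the end of Section 5. So none of these may be assumed when proving Theorem \ref{MTHM.2}. The DiPerna argument must be run self-contained at the level of the Young measure: double the variables with $\nu$ against itself (or against a second entropy measure-valued solution), using only Definition \ref{LR.4}, the Kruzkov family, and the hyperbolicity/foliation structure to show that $t\mapsto\int_{\Hcal_t}\la \nu\otimes\nu,\, i^*\Omegabf(\cdot,\cdot)\ra$ is nonincreasing up to boundary terms; the strong attainment of the data on the inflow boundary then forces $\la\nu_p\otimes\nu_p,\,|\lambda-\mu|\ra=0$ a.e., hence $\nu_p$ is a Dirac mass, and one checks from Definition \ref{LR.4} that its barycenter is an entropy solution, uniqueness of entropy solutions being a byproduct since they are particular measure-valued solutions. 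A second, smaller gap: you assert the boundary contribution is controlled ``since both $\nu$ and $u$ take the same boundary data $u_B$,'' but the boundary terms in Definition \ref{LR.4} involve the unknown boundary fields $\gamma$ of the two solutions, which need not coincide; making this rigorous requires the kind of case analysis of the Kruzkov boundary terms that the paper performs in \eqref{eq:biqK}--\eqref{eq:boundary}, adapted to the measure-valued setting, rather than a direct appeal to equality of the prescribed data.
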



\section{Finite volume scheme based on total flux functions}
\label{finit} 

\subsection{Triangulations and numerical flux functions}
 
To introduce the finite volume scheme, we first need to introduce a triangulation
(or, rather, a family of triangulations).

\begin{definition}
\begin{itemize}
\item
A {\rm triangulation} $\Tau$ of the spacetime $M$ is a set of disjoint open subsets $K \subset M$ called {\sl cells}
such that $\cup_{K \in \Tau} \Kbar = M$ and the boundary of each cell $K$ is the union of finitely many smooth hypersurfaces 
$e \subset M$, called the faces of $K$. The set of faces of $K$ is denoted $\del K$. 
For any two cells $K,K'$ one also requires that $\Kbar \cap \Kbar'$ is a common face of $K$ and $K'$ or a submanifold of $M$ 
with co-dimension $2$, at least. 

\item The triangulation is said to be {\sl admissible} 
if  every cell $K$ admits one inflow face $e_K^-$ and one outflow face $e_K^+$ (in the sense of Definition~\ref{679} 
and with respect to $K$). The set of remaining faces (which might also be inflow or outflow faces) is denoted 
by $\del^0 K:= \del K \setminus\{e^\pm_K\}$. In addition, 
one requires that every inflow face $e_K^-$ is the outflow $e_{K'}^+$ of some other cell $K'$ or else
a subset of $\Hcal_0$, and that 
every {\rm vertical face} $e^0_K \in \del^0K$ is also a vertical face of some other cell or else a part of the vertical boundary $\del^0 M$.

\item A triangulation is said to be {\rm associated with the foliation}  
if there exists a sequence of times $0=t_0 < t_1 < \dots < t_N=T$ such that all spacelike faces $e_K^-,e_K^+$ are subsets of the slices $\Hcal_n := \Hcal_{t_n}$ for some $n=0,\dots,N$ and determine triangulations of them.
\end{itemize}
\end{definition}

It will be convenient to fix orientations as follows. 
Each face $e \in \del^0 K$  is oriented such that an $n$-form  $\eta \in \CL^n(e)$ is positive if it satisfies 
  $N \wedge \eta >0$
  for every outward pointing $1$-form  $N$.
 These orientations together with the orientations of $e_K^\pm$ given in Definition \ref{hyperb-def} will yield the desired signs in the definition of the finite volume scheme as can be seen by the following observation: 
 For every smooth solution $u$ of 
\eqref{LR.1} Stokes theorem yields 
\be
\label{excons}
\int_{e_K^+}i^*\omega (u) = \int_{e^-_K}i^*\omega (u) - \sum_{e_K^0 \in \del K^0} \int_{e^0_K}i^*\omega (u).
\ee


The finite volume scheme, as we propose to define it here, relies on {\sl approximate total flux} along 
the spacelike faces $e_K^\pm$, that is if $u$ is a solution we replace the total flux by its average
for some constant state $u_{e_K^\pm}$: 
$$
\int_{e_K^\pm} i^* \omega (u) \approx q_{e_K^\pm}(u_{e_K^\pm}).  
$$ 
The evolution of these values is determined by the Stokes formula provided 
we prescribe the total flux along vertical faces $e_K^0 \in \del^0 K$. 
Precisely, for each cell $K$ and $e \in \del^0 K$ (oriented as above) we introduce a Lipschitz continuous numerical flux
$Q_{K,e} : \mathbb{R}^2 \longrightarrow \mathbb{R}$ satisfying the classical consistency, conservation and monotonicity properties
\begin{itemize}

 \item $Q_{K,e}(\ubar, \ubar )= q_e (\ubar)$.

 \item $Q_{K,e}(\ubar, \vbar) = - Q_{K_e, e}(\vbar, \ubar)$. 

 \item $\frac{\del}{\del \ubar} Q_{K,e}(\ubar, \vbar ) \geq 0, 
       \qquad \frac{\del}{\del \vbar} Q_{K,e}(\ubar, \vbar ) \leq 0$,
\end{itemize}
where $K_e$ denotes the cell sharing the face $e$ with $K$.
Note that in the right hand side of the first condition it is understood that $e$ is oriented as a boundary of $K$.

Now, the consequence \eqref{excons} of Stokes formula suggests to pose, for each cell $K$,   
\be 
\label{fvgm}
q_{e_K^+} (u_{e_K^+}) = q_{e^-_K} (u_{e_K^-}) - \sum_{e \in \del^0 K} Q_{K,e}(u_{e^-_K},u_{e^-_{K_e}}),
\ee
as long as $e$ is not a part of the boundary of the manifold. 
The set $ \del^0 K$, by definition, may also include boundary faces: for such faces, the element $K_e$ is not defined and, instead, 
we determine the corresponding state $u_{e^-_{K_e}}$ (still denoted with the same symbol, with now $K_e$ being a ``fictitious'' cell), 
as follows. Along the boundary, to handle {\sl non-inflow faces} $e^0 \subset \del M$ we fix a positive
$n$-form field $\alpha_B$ (once for all) and 
define 
\be
\label{boundarydata}
u_{e^0}:= \Big(\int_{e^0} \alpha_B\Big)^{-1} \int_{e^0} u_B \, \alpha_B  .
\ee
This definition is used even on the inflow parts of the boundary
(where we could also inverse the total flux function
so that the additional $n$-form field is not really necessary on inflow faces).   

Finally to guarantee the stability of the scheme we impose the following version of the {\sl CFL stability condition:}
for all $K \in \Tau$
\be
\label{CFL}
\sum_{e^0 \in \del^0 K} \left|\frac{\text{sup}_{u,v} \big( \del_u Q_{K,e^0}(u,v) - \del_v Q_{K,e^0}(u,v) \big)} {\text{inf}_{u} \del_u q_{e^+_K}(u)} \right|\leq \frac{1}{2}, 
\ee
in which the supremum is taken over range of $u_B$.

\begin{remark}[CFL condition]
 Let us try to explain the meaning of the quantities in \eqref{CFL} in case of a conservation law on a Riemannian manifold, confer \eqref{Riemann} and \eqref{fluxformR}.
 In this case the restriction of $\del_u \omega$ to the tangent space of the spatial face $e_K^+$ is nothing but $\sqrt{|g|} dx^1 \wedge \dots \wedge dx^n$, so that 
$ \text{inf}_{u} \del_u q_{e^+_K}(u)$ coincides with the volume of $e_K^+$ measured by the Riemannian metric.
Similarly, $Q_{K,e^0}$ mesures the flux accross the spacetime face $e^0$ whose size is the product of the time step $\Delta t$ and the area of the corresponding $(n-1)-$dimensional face of $e_K^+.$
Thus, for a non-degenerate triangulation with mesh-width $h$, the expression on the left in \eqref{CFL} scales as $\Delta t \cdot h^{n-1} \cdot h^{-n}$.
\end{remark}


\subsection{Convex decomposition and local entropy inequalities}

Our analysis of the finite volume scheme is based on a convex decomposition of the fluxes. 
This technique goes back to Tadmor \cite{Tadmor} for one-dimensional problems, Coquel and LeFloch \cite{CLF1,CLF2} for equations in several space dimensions, and Cockburn, Coquel and LeFloch \cite{CCL95} for finite volume schemes. 
Due to the geometry compatibility of the flux \eqref{LR.3} and our choice of orientations of faces Stokes Theorem implies for each cell $K$
$$
 0= \int_K d(\omega(u_{e_K^-})) = q_{e_K^+}(u_{e_K^-}) -q_{e_K^-}(u_{e_K^-}) + \sum_{e^0 \in \del^0 K} Q_{K,e^0}(u_{e^-_K},u_{e^-_K}) .
$$
Subtracting this identity from \eqref{fvgm} yields
$$
 q_{e_K^+}(u_{e_K^+}) - q_{e_K^+}(u_{e_K^-}) + \sum_{e^0 \in \del^0 K} \left( Q_{K,e^0}(u_{e^-_K},u_{e^-_{K_{e^0}}}) - Q_{K,e^0}(u_{e^-_K},u_{e^-_K}) \right) =0.
$$
 
Suppose we are given some real $\lambda_{K,e^0} \geq 0$ for each vertical face $e^0 \in \del^0K$ with $K \in \Tau$ such that $\sum_{e^0 \in \del^0K} \lambda_{K,e^0}=1$
and define $\tilde u_{K,e^0} $ by
\be
\label{qtilde}
q_{e_K^+}(\tilde u_{K,e^0}) = q_{e_K^+}(u_{e_K^-}) -
\frac{1}{\lambda_{K,e^0}} \left( Q_{K,e^0}(u_{e^-_K},u_{e^-_{K_{e^0}}}) - Q_{K,e^0}(u_{e^-_K},u_{e^-_K}) \right),
\ee
where we will have to show that the right hand side lies in the image of $q_{e_K^+}$. So
we have a {\sl convex decomposition}
\bel{convdec} q_{e_K^+}(u_{e_K^+}) = \sum_{e^0 \in \del^0 K} \lambda_{K,e^0} q_{e_K^+}(\tilde u_{K,e^0}).\ee
For the choice of $\lambda_{K,e^0} \geq 0$, at least the ratio  
$$
\frac{1}{\lambda_{K,e^0}} \left( Q_{K,e^0}(u_{e^-_K},u_{e^-_{K_{e^0}}}) - Q_{K,e^0}(u_{e^-_K},u_{e^-_K}) \right)
$$ 
should be finite so that 
$ \lambda_{K,e^0} q_{e_K^+}(\tilde u_{K,e^0})$
is well-defined.
This is fulfilled with the following definition:
$$ \hat \lambda_K := \sum_{e^0 \in \del^0K} \underbrace{ \left|\frac{\text{sup}_{u,v} \left(\del_u Q_{K,e^0}(u,v) - \del_v Q_{K,e^0}(u,v)\right)} {\text{inf}_u \del_u q_{e_K^+}(u)} \right|}
      _{=:\hat \lambda_{K,e^0}} \leq \frac{1}{2}, \quad \lambda_{K,e^0} := \frac{\hat \lambda_{K,e^0}}{\hat \lambda_K}.
$$

We will show that the right hand side of \eqref{qtilde} lies in the image of $q_{e_K^+}$ for the case 
$u_{e_K^-} \leq u_{e_{K_{e^0}}^-}$, the other case follows analogously. We have
$$
\aligned
0 &\leq  -\frac{1}{\lambda_{K,e^0}} \left( Q_{K,e^0}(u_{e^-_K},u_{e^-_{K_{e^0}}}) - Q_{K,e^0}(u_{e^-_K},u_{e^-_K})\right)\\
  &\leq  \frac{1}{2} \frac{\text{inf}_u \del_u q_{e_K^+}(u)}{\text{sup}_{u,v}|\del_v Q_{K,e^0}(u,v)| }
          \text{sup}_{u,v}|\del_v Q_{K,e^0}(u,v)| (u_{e_{K_{e^0}}^-}-u_{e_K^-})\\
  &\leq  \frac{1}{2} (q_{e_K^+}(u_{e_{K_{e^0}}^-}) - q_{e_K^+}(u_{e_K^-}))
\endaligned
$$
due to the monotonicity of $q_{e_K^+}$. Hence the right hand side of \eqref{qtilde} lies in the interval $[q_{e_K^+}(u_{e_K^-}),q_{e_K^+}(u_{e_{K_{e^0}}^-})]$ and therefore in the image of $q_{e_K^+}$.
The convex decomposition \eqref{convdec} enables us to prove entropy inequalities, as follows.

\begin{lemma}\label{convdecflux}
For every convex entropy flux $\Omega$ and cell $K$ we have 
\bel{eq:convdecflux}
 q^\Omega_{e_K^+}(u_{e_K^+}) \leq \sum_{e^0 \in \del^0K} \lambda_{K,e^0} q^\Omega_{e_K^+}(\tu),
 \ee
which makes sense as $e_K^+$ is oriented. 
\end{lemma}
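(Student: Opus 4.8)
The plan is to deduce \eqref{eq:convdecflux} from the scalar convex decomposition \eqref{convdec} by Jensen's inequality, after observing that composition with $q_{e_K^+}^{-1}$ renders $q^\Omega_{e_K^+}$ a convex function of the total flux variable.

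First I would record the monotonicity facts. Since $e_K^+$ is spacelike, Lemma~\ref{convexities} applies, and the orientation convention \eqref{orie} gives $\del_u q_{e_K^+}(\ubar)=\int_{e_K^+} i^*\del_u\omega(\ubar)>0$ for every $\ubar\in\RR$; hence $q_{e_K^+}$ is a strictly increasing bijection of $\RR$ onto an open interval $I\subset\RR$, with strictly increasing inverse $q_{e_K^+}^{-1}\colon I\to\RR$. The image estimate established just before the statement of the lemma shows that $q_{e_K^+}(u_{e_K^+})$ and every $q_{e_K^+}(\tu)$ lie in $I$; since $\lambda_{K,e^0}\ge 0$ and $\sum_{e^0\in\del^0K}\lambda_{K,e^0}=1$, the identity \eqref{convdec} exhibits $q_{e_K^+}(u_{e_K^+})$ as a finite convex combination of the points $q_{e_K^+}(\tu)\in I$.

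Next I would set $g:=q^\Omega_{e_K^+}\circ q_{e_K^+}^{-1}\colon I\to\RR$ and prove $g$ is convex. Inserting the defining formula of the entropy flux field (Definition~\ref{key63}) and moving the $\vbar$-integration outside the integral over $e_K^+$ yields $q^\Omega_{e_K^+}(\ubar)=\int_0^{\ubar}\del_u U(\vbar)\,\del_u q_{e_K^+}(\vbar)\,d\vbar$, so $\del_u q^\Omega_{e_K^+}=\del_u U\cdot\del_u q_{e_K^+}$ a.e.; by the chain rule this is precisely identity \eqref{444}, i.e.\ $g'=\del_u U\circ q_{e_K^+}^{-1}$ a.e.\ on $I$. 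Now $\del_u U$ is nondecreasing because $U$ is convex, and $q_{e_K^+}^{-1}$ is nondecreasing, so $g'$ is nondecreasing on $I$; since a locally Lipschitz function whose a.e.\ derivative is nondecreasing is convex, $g$ is convex on $I$.

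Finally I would apply Jensen's inequality to the convex combination above:
$$
\aligned
q^\Omega_{e_K^+}(u_{e_K^+})
&= g\Big(\sum_{e^0\in\del^0K}\lambda_{K,e^0}\,q_{e_K^+}(\tu)\Big)\\
&\le \sum_{e^0\in\del^0K}\lambda_{K,e^0}\,g\big(q_{e_K^+}(\tu)\big)
= \sum_{e^0\in\del^0K}\lambda_{K,e^0}\,q^\Omega_{e_K^+}(\tu),
\endaligned
$$
which is \eqref{eq:convdecflux}. I do not anticipate a genuine obstacle; the only points needing care are bookkeeping ones: keeping all arguments inside the interval $I$ so that $g$ and Jensen apply — exactly what the image estimate preceding the lemma provides — and deducing the convexity of $g$ from the mere Lipschitz (rather than $C^1$) regularity of $U$.
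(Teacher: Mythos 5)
Your proof is correct, but it takes a genuinely different route from the paper. You deduce \eqref{eq:convdecflux} purely at the level of total fluxes: using \eqref{orie} to see that $q_{e_K^+}$ is a strictly increasing bijection onto an interval, identity \eqref{444} to see that $g:=q^\Omega_{e_K^+}\circ q_{e_K^+}^{-1}$ has a nondecreasing (a.e.) derivative and hence is convex, and then Jensen's inequality applied to the convex combination \eqref{convdec}. The paper instead argues at the level of $n$-forms on $e_K^+$: it integrates by parts in the parameter $\ubar$, writing $\len\big(i^*\Omega(\tu)-i^*\Omega(\uep)\big)$ as a first-order term $\del_u U(\uep)\big(i^*\omega(\tu)-i^*\omega(\uep)\big)$ plus a remainder $\int_{\tu}^{\uep}\del_{uu}U(v)\big(i^*\omega(v)-i^*\omega(\tu)\big)\,dv$; after summing with the weights $\len$ and integrating over $e_K^+$, the first-order term vanishes by \eqref{convdec} and the remainder is nonnegative because $U$ is convex and $i^*\del_u\omega$ is a positive $n$-form. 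Your argument is shorter and leans on exactly the convexity of $q^\Omega_{e_K^+}\circ q^{-1}_{e_K^+}$ that the paper itself invokes later (in Lemmas \ref{dbc} and \ref{gede}), and your remark that a locally Lipschitz $g$ with a.e.\ nondecreasing derivative is convex handles the merely Lipschitz regularity of $U$ cleanly. What the paper's pointwise formulation buys is a version of the inequality \emph{before} integration over $e_K^+$, which is reused in the proof of Lemma \ref{52} (estimate \eqref{525}) with a test function $\psi$ inserted under the integral --- a weighted variant that the Jensen argument at the total-flux level does not directly provide. For the lemma as stated, both proofs are complete.
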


\begin{proof}
Using integration by parts we get
$$
\aligned
& \sum_{e^0 \in \del^0K} \len \left( i^* \Omega(\tu) - i^*\Omega(\uep) \right)\\
& = \sum_{e^0 \in \del^0K} \len \Big[\left[ \del_u U(v) i^*\omega(v)\right]_{\uep}^\tu + \int_{\tu}^\uep \del_{uu}U(v) i^*\omega(v)\, dv\Big]\\
& = \sum_{e^0 \in \del^0K} \len \Big[\del_u U(\uep) \left[ i^*\omega(\tu) - i^*\omega(\uep) \right]
+ \int_{\tu}^\uep \del_{uu}U(v)\left( i^*\omega(v)- i^* \omega (\tu) \right)\, dv\Big].
\endaligned
$$
When we integrate over $e_K^+$  the first term in the last line vanishes due to \eqref{convdec},  while the latter term is non-negative since $U$ is convex and 
$i^* \del_u \omega  $ is a positive $n$-form.
\end{proof}

The next step is the derivation of  an entropy inequality for the faces. Note that here and in the sequel we do not require entropies to be admissible.

\begin{lemma}[Entropy inequality for the faces]\label{num-ent-flux}
For every convex entropy pair $(U,\Omega)$ and each $K \in \Tau$ and $e^0 \in \del^0K$ there exists a numerical entropy flux
function $Q^\Omega_{K,e^0}: \mathbb{R}^2 \longrightarrow \mathbb{R}$ satisfying for every $\ubar, \vbar \in \mathbb{R}:$
\begin{itemize}
\item $Q^\Omega_{K,e^0}$ is consistent with the entropy flux $\Omega$:
\be
Q^\Omega_{K,e^0}(\ubar, \ubar)= \int_{e^0} i^*\Omega(\ubar).
\ee
\item Conservation 
\be\label{entrcons}
Q^\Omega_{K,e^0}(\ubar, \vbar)=-Q^\Omega_{K_{e^0},e^0}(\vbar, \ubar).
\ee
\item Discrete entropy inequality
\be\label{dei}
q^\Omega_{e_K^+}(\tu) \leq  q^\Omega_{e_K^+}(u_{e_K^-}) -
                         \frac{1}{\lambda_{K,e^0}} \left(Q^\Omega_{K,e^0}(u_{e_K^-},u_{e^-_{K_{e^0}}}) - Q^\Omega_{K,e^0}(u_{e_K^-},u_{e_K^-}) \right) .
\ee
\item Discrete boundary entropy inequality
\be\label{49b}
 q^\Omega_{e_K^+}(\bu) \leq q^\Omega_{e_K^+}(u_{e_{K_{e^0}}^-})  
 +  \frac{1}{\lambda_{K,e^0}} 
 \left(Q^\Omega_{K,e^0}(u_{e_K^-},u_{e^-_{K_{e^0}}}) -Q^\Omega_{K,e^0}(u_{e^-_{K_{e^0}}},u_{e^-_{K_{e^0}}}) \right) ,
\ee
where $\bu$ is defined by
\be \label{baru}
 q_{e_K^+}(\bu):=  q_{e_K^+}(u_{e_{K_{e^0}}^-}) + \frac{1}{\lambda_{K,e^0}} 
\left(Q_{K,e^0}(u_{e_K^-},u_{e^-_{K_{e^0}}}) -Q_{K,e^0}(u_{e^-_{K_{e^0}}},u_{e^-_{K_{e^0}}}) \right).
\ee
\end{itemize}
\end{lemma}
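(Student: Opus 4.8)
The plan is to construct $Q^\Omega_{K,e^0}$ directly by mimicking the convex‐decomposition argument used for the ordinary flux in \eqref{qtilde}--\eqref{convdec}, replacing $q_{e_K^+}$ by $q^\Omega_{e_K^+}$ and $Q_{K,e^0}$ by a suitable entropy numerical flux, and then checking that the classical Crandall--Majda / Cockburn--Coquel--LeFloch type estimates survive the geometric reformulation. Concretely, I would first define the numerical entropy flux by the usual averaging/path formula adapted to the total flux: set
$$
Q^\Omega_{K,e^0}(\ubar,\vbar) := q^\Omega_{e^0}(\ubar)
- \int_{\vbar}^{\ubar} \del_u U(w)\, \del_u Q_{K,e^0}(w,\vbar)\, dw
$$
(or an equivalent two–point symmetrisation thereof), where $q^\Omega_{e^0}(\ubar)=\int_{e^0} i^*\Omega(\ubar)$. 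Consistency, $Q^\Omega_{K,e^0}(\ubar,\ubar)=\int_{e^0} i^*\Omega(\ubar)$, is then immediate, and the conservation identity \eqref{entrcons} follows from the conservation property $Q_{K,e^0}(\ubar,\vbar)=-Q_{K_{e^0},e^0}(\vbar,\ubar)$ together with the change of orientation of $e^0$ as a face of $K_{e^0}$ rather than of $K$ (this is where the sign bookkeeping fixed just before \eqref{excons} is used).

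The heart of the matter is the discrete entropy inequality \eqref{dei}. Here I would exploit that, by construction \eqref{qtilde}, $\tu$ is obtained from the monotone two–point map
$$
\mathcal H(a,b) := q_{e_K^+}^{-1}\!\Big( q_{e_K^+}(a) - \tfrac{1}{\len}\big(Q_{K,e^0}(a,b)-Q_{K,e^0}(a,a)\big)\Big)
$$
evaluated at $(u_{e_K^-},u_{e^-_{K_{e^0}}})$, and that under the CFL condition \eqref{CFL} (which is exactly the bound $\hat\lambda_{K,e^0}\le \tfrac12$ entering the definition of $\len$) this map is monotone nondecreasing in each argument and has the consistency property $\mathcal H(a,a)=a$ in the variable that matters. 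The standard argument then runs: write $\tu$ as this monotone function of the two input states, compare with the value one would get by feeding in the constant state, and use convexity of $U$ together with the positivity of $i^*\del_u\omega$ on the spacelike face $e_K^+$ (Lemma~\ref{convexities}) to pass from the pointwise convexity inequality to the inequality \eqref{dei} for the total entropy fluxes $q^\Omega_{e_K^+}$. In effect one integrates over $e_K^+$ the elementary inequality
$$
U'(\tu)\big(q_{e_K^+}(\tu)-q_{e_K^+}(u_{e_K^-})\big)
\;\le\; q^\Omega_{e_K^+}(\tu)-q^\Omega_{e_K^+}(u_{e_K^-}),
$$
which is just convexity of $U$ read through the monotone reparametrisation $q_{e_K^+}$, and then replaces $q_{e_K^+}(\tu)-q_{e_K^+}(u_{e_K^-})$ by the finite–difference of the numerical flux via \eqref{qtilde}; monotonicity of $\mathcal H$ guarantees the inequality points the right way. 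The boundary entropy inequality \eqref{49b} is proved in exactly the same fashion, this time expanding around the neighbouring state $u_{e^-_{K_{e^0}}}$ and using $\bu$ from \eqref{baru} in place of $\tu$; note the reversed sign of the flux difference there reflects that one is now measuring the discrepancy of $Q_{K,e^0}$ relative to its diagonal value in the \emph{second} slot.

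The main obstacle I anticipate is not any single inequality but making the monotonicity bookkeeping rigorous: one must verify that under \eqref{CFL} the map $\mathcal H$ (and its boundary analogue) is genuinely monotone in each variable \emph{after} composing with the reparametrisation $q_{e_K^+}^{-1}$, i.e. that the Lipschitz bounds $\sup_{u,v}\del_u Q_{K,e^0}$, $\sup_{u,v}|\del_v Q_{K,e^0}|$ and the lower bound $\inf_u \del_u q_{e_K^+}(u)>0$ combine exactly so that the incremental coefficients stay in $[0,1]$; this is precisely the role of the constant $\tfrac12$ and of the definition of $\len$, and it is the step where one must also confirm (as was done just before the statement of Lemma~\ref{convdecflux}) that the right–hand side of \eqref{qtilde} stays in the image of $q_{e_K^+}$ so that $\tu$, and hence $q^\Omega_{e_K^+}(\tu)$, is well defined. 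A secondary but routine point is checking that all the $n$-form pullbacks $i^*\Omega(\cdot)$, $i^*\omega(\cdot)$ on $e_K^+$ are integrable and that the orientation of $e_K^+$ fixed in Definition~\ref{hyperb-def} is the one that makes \eqref{eq:convdecflux} and \eqref{dei}--\eqref{49b} have the stated signs; this follows from \eqref{orie} and Lemma~\ref{convexities} and does not require new ideas.
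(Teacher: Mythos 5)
Your Step 1 --- the monotonicity, under the CFL condition \eqref{CFL}, of the two-point map obtained from \eqref{qtilde} --- coincides with the paper's first step (the map $H_{K,K',e^0}$ there), and your remarks on well-definedness of $\tu$ and on orientations are fine. However, the core of the proposal has two genuine gaps. First, the numerical entropy flux you define by the path formula $Q^\Omega_{K,e^0}(\ubar,\vbar) := q^\Omega_{e^0}(\ubar)-\int_{\vbar}^{\ubar}\del_u U(w)\,\del_u Q_{K,e^0}(w,\vbar)\,dw$ is not conservative in general: writing out \eqref{entrcons} with $Q_{K_{e^0},e^0}(\vbar,\ubar)=-Q_{K,e^0}(\ubar,\vbar)$ reduces it to the identity $\int_{\vbar}^{\ubar}U'(w)\big(\del_u Q_{K,e^0}(w,w)+\del_v Q_{K,e^0}(w,w)\big)\,dw=\int_{\vbar}^{\ubar}U'(w)\big(\del_u Q_{K,e^0}(w,\vbar)+\del_v Q_{K,e^0}(\ubar,w)\big)\,dw$, which holds for flux-split schemes ($Q(u,v)=f(u)+g(v)$) but fails for a generic monotone flux such as Godunov's. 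The conservation property is not a matter of orientation bookkeeping; it is exactly why the paper constructs the entropy flux only for Kruzkov's entropies, via $\Qbf_{K,e^0}(u,v,c)=Q_{K,e^0}(u\vee c,v\vee c)-Q_{K,e^0}(u\wedge c,v\wedge c)$, for which consistency and conservation are immediate.

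Second, the ``elementary inequality'' you integrate, $U'(\tu)\big(q_{e_K^+}(\tu)-q_{e_K^+}(u_{e_K^-})\big)\le q^\Omega_{e_K^+}(\tu)-q^\Omega_{e_K^+}(u_{e_K^-})$, is the \emph{concavity} direction of the tangent-line inequality for the convex function $q^\Omega_{e_K^+}\circ q_{e_K^+}^{-1}$ (Lemma~\ref{convexities}), hence false in general; and even with the direction corrected, a tangent-line estimate only yields a bound in which $\del_u U$ evaluated at one state multiplies a difference of the \emph{original} numerical fluxes --- this is precisely the linearized inequality the paper proves separately in Lemma~\ref{dbc}, and it is not of the form \eqref{dei}: the resulting right-hand side is not a two-point, consistent, conservative entropy flux (it depends on $\tu$, hence on $K$ and $\len$), and the conservation property \eqref{entrcons} is indispensable later when the inequalities are summed over cells (Lemma~\ref{52}). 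The mechanism that actually produces \eqref{dei} is not convexity but the Crandall--Majda lattice argument: monotonicity of $H_{K,K',e^0}$ gives \eqref{47c}, hence $H_{K,K,e^0}(u_{e_K^-}\vee c,u_{e^-_{K_{e^0}}}\vee c)-H_{K,K,e^0}(u_{e_K^-}\wedge c,u_{e^-_{K_{e^0}}}\wedge c)\ge \big|H_{K,K,e^0}(u_{e_K^-},u_{e^-_{K_{e^0}}})-H_{K,K,e^0}(c,c)\big| = \big|q_{e_K^+}(\tu)-q_{e_K^+}(c)\big| = q_{e_K^+}^{\bf \Omega}(\tu,c)$ by \eqref{qtilde}, while the identity \eqref{46} rewrites the left-hand side as $q_{e_K^+}^{\bf \Omega}(u_{e_K^-},c)-\frac{1}{\len}\big(\Qbf_{K,e^0}(u_{e_K^-},u_{e^-_{K_{e^0}}},c)-\Qbf_{K,e^0}(u_{e_K^-},u_{e_K^-},c)\big)$; this is \eqref{dei} for Kruzkov entropies, and \eqref{49b} follows in the same way with $K'=K_{e^0}$, the arguments swapped, and \eqref{baru}. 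Your proposal is missing this min/max step, and the convexity substitute you offer cannot close the argument; general convex entropies are then reached by superposition of Kruzkov pairs, not by the direct construction you attempt.
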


\begin{remark}
Similarly to the proof of the well-definedness of $\tu$ one can show that the right hand side of \eqref{baru} lies between $q_{e_K^+}(u_{e_K^-})$ and $q_{e_K^+}(u_{e_{K_{e^0}}^-})$. Hence $\bu$ is well-defined.
\end{remark}

\begin{proof}
Step 1:
For $u,v \in \RR,$ $K \in \Tau$, $e^0 \in \del^0K$ and $K' \in \{K,K_{e^0}\}$ we introduce the notation
$$
H_{K,K',e^0}(u,v):= q_{e_K^+}(u) - \frac{1}{\len} \left( Q_{K',e^0}(u,v) - Q_{K',e^0}(u,u) \right),
$$
where
$$
Q_{K_{e^0},e^0}(u,v):= -Q_{K,e^0}(v,u) \ \text{ for } e^0 \in \del^0M,
$$
and observe that
$$
H_{K,K',e^0} (u,u)= q_{e_K^+}(u).
$$
We claim that $H_{K,K',e^0}$ satisfies the following monotonicity properties:
\be
\label{H-mono}
\frac{\del}{\del u}H_{K,K',e^0}(u,v) \geq 0, \qquad \frac{\del}{\del v}H_{K,K',e^0}(u,v) \geq 0.
\ee
The second property is obvious because of the monotonicity properties of the numerical fluxes.
For the first property the monotonicity properties and the fact that $\del_u q_{e_K^+}$ is  positive imply
$$
\aligned
\frac{\del}{\del u}H_{K,K',e^0}(u,v) &\geq \del_u q_{e_K^+}(u) - \frac{1}{\len}| \del_u Q_{K',e^0}(u,v)| - \frac{1}{\len}| \del_2 Q_{K',e^0}(u,u)|\\
& \geq \left( 1 - 2\frac{\hat \lambda_{K,e^0}}{\len} \right) \del_u q_{e_K^+} \\
& \geq  \left( 1 - 2\hat \lambda_K \right) \del_u q_{e_K^+} ,
\endaligned
$$
which is non-negative due to the CFL condition \eqref{CFL}.

\

Step 2: We will establish the entropy inequalities only for the family of Kruzkov's entropies $\Omegabf$.
Therefore we introduce the numerical versions of Kruzkov's entropy fluxes
$$
\Qbf_{K,e^0}(u,v,c):= Q_{K,e^0} (u \vee c, v \vee c) - Q_{K,e^0} (u \wedge c, v \wedge c),
$$
where $a \vee b :=\max(a,b)$ and $a \wedge b := \min(a,b)$.
We observe that $\Qbf_{K,e^0}(u,v,c)$ satisfies the first two assertions of the lemma
when the entropy flux is replaced by Kruzkov's entropy flux $\Omega = \Omegabf$.
We observe that by definition of $H_{K,K',e^0}$
\be
\label{46}
\aligned
& H_{K,K',e^0}(u \vee c,v \vee c) - H_{K,K',e^0}(u \wedge c, v \wedge c)\\
& = q_{e_K^+}^{\bf \Omega}(u,c) - \frac{1}{\len} \left( \Qbf_{K',e^0}(u,v,c) - \Qbf_{K',e^0}(u,u,c) \right),
\endaligned
\ee
where we used
$$
q_{e_K^+}^{\bf \Omega}(u,c) := q_{e_K^+}(u \vee c) - q_{e_K^+}(u \wedge c) .
$$
Next we will check that for any $c \in \RR$
\be
\label{47a}
H_{K,K,e^0}(u_{e_K^-} \vee c,u_{e_{K_{e^0}}^-} \vee c) - H_{K,K,e^0}(u_{e_K^-} \wedge c,u_{e_{K_{e^0}}^-} \wedge c) \geq
q_{e_K^+}^{\bf \Omega}(\tu,c).
\ee

To this end, we first observe that due to the monotonicity of $H_{K,e^0}$
\be\label{47c}
\aligned
H_{K,K',e^0}(u ,v ) \vee H_{K,K',e^0}(c, c) &\leq H_{K,K',e^0}(u \vee c,v \vee c),\\
H_{K,K',e^0}(u ,v ) \wedge H_{K,K',e^0}(c, c) &\geq H_{K,K',e^0}(u \wedge c,v \wedge c).
\endaligned
\ee
Inserting $K'=K$, $u=u_{e_K^-}$ and $v=u_{e_{K_{e^0}}^-}$ in \eqref{47c} yields
$$
\aligned
& H_{K,K,e^0}(u_{e_K^-} \vee c,u_{e_{K_{e^0}}^-} \vee c) - H_{K,K,e^0}(u_{e_K^-} \wedge c,u_{e_{K_{e^0}}^-} \wedge c)\\
&\geq \left| H_{K,K,e^0}(u_{e_K^-} ,u_{e_{K_{e^0}}^-} ) - H_{K,K,e^0}(c,c) \right|\\
& \stackrel{\eqref{qtilde}}= |q_{e_K^+}(\tilde u_{K,e^0}) - q_{e_K^+}(c)|\\
& = \text{sgn}(\tu - c) (q_{e_K^+}(\tu) -  q_{e_K^+}(c)) = q_{e_K^+}^{\bf \Omega}(\tu,c).
\endaligned
$$
This proves \eqref{47a}. Combining \eqref{47a} with \eqref{46} (using $K'=K$, $u=u_{e_K^-}$ and $v=u_{e_{K_{e^0}}^-}$)  yields the third assertion of the lemma.

Let us now prove:
\be\label{47b}
H_{K,K_{e^0},e^0}(u_{e_{K_{e^0}}^-}\vee c,u_{e_K^-}  \vee c) - H_{K,K_{e^0},e^0}( u_{e_{K_{e^0}}^-}\wedge c,u_{e_K^-} \wedge c) \geq
q_{e_K^+}^{\bf \Omega}(\bu,c).
\ee
It follows by inserting $K'=K_{e^0}$, $u=u_{e_{K_{e^0}}^-}$ and $v=u_{e_K^-}$ in \eqref{47c} which yields
$$
\aligned 
&  H_{K,K_{e^0},e^0}( u_{e_{K_{e^0}}^-} \vee c,u_{e_K^-} \vee c) - H_{K,K_{e^0},e^0}(u_{e_{K_{e^0}}^-} \wedge c,u_{e_K^-} \wedge c)\\
& \geq | H_{K,K_{e^0},e^0}( u_{e_{K_{e^0}}^-},u_{e_K^-}) - H_{K,K_{e^0},e^0}(c,c)|\\
& \stackrel{\eqref{baru}}=|q_{e_K^+}(\bu) -  q_{e_K^+}(c) | = q_{e_K^+}^{\bf \Omega}(\bu,c).
\endaligned
$$
Inequality \eqref{47b} combined with \eqref{46} (setting $K'=K_{e^0}$, $u=u_{e_{K_{e^0}}^-}$ and $v=u_{e_K^-}$) and the conservation property of $Q_{K,e^0}^{\bf \Omega}$  yields the fourth assertion of the lemma.
\end{proof}

Combining the last two lemmas we get: 

\begin{lemma}[Entropy inequality per cell] For each cell $K \in \Tau$ we have
$$
q^\Omega_{e_K^+}(u_{e_K^+}) - q^\Omega_{e_K^+}(u_{e_K^-}) + 
                         \sum_{e^0\in \del^0K} \left(Q^\Omega_{K,e^0}(u_{e_K^-},u_{e^-_{K_{e^0}}}) - Q^\Omega_{K,e^0}(u_{e_K^-},u_{e_K^-}) \right) \leq 0.
$$
\end{lemma}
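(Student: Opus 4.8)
The plan is to combine the two preceding lemmas by summing the face-wise discrete entropy inequality \eqref{dei} against the convex weights $\lambda_{K,e^0}$ and then invoking the convexity estimate of Lemma~\ref{convdecflux}. First I would recall that the weights $\lambda_{K,e^0} \geq 0$ satisfy $\sum_{e^0 \in \del^0 K} \lambda_{K,e^0} = 1$ by construction, so that multiplying \eqref{dei} by $\lambda_{K,e^0}$ and summing over $e^0 \in \del^0 K$ yields
$$
\sum_{e^0 \in \del^0K} \lambda_{K,e^0}\, q^\Omega_{e_K^+}(\tu)
\leq q^\Omega_{e_K^+}(u_{e_K^-})
- \sum_{e^0 \in \del^0K} \left( Q^\Omega_{K,e^0}(u_{e_K^-},u_{e^-_{K_{e^0}}}) - Q^\Omega_{K,e^0}(u_{e_K^-},u_{e_K^-}) \right),
$$
where on the right-hand side the factors $1/\lambda_{K,e^0}$ coming from \eqref{dei} cancel against the weights, and $\sum_{e^0}\lambda_{K,e^0} q^\Omega_{e_K^+}(u_{e_K^-}) = q^\Omega_{e_K^+}(u_{e_K^-})$ since the summand does not depend on $e^0$.

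Next I would bound the left-hand side from below using Lemma~\ref{convdecflux}, namely \eqref{eq:convdecflux}, which gives
$$
q^\Omega_{e_K^+}(u_{e_K^+}) \leq \sum_{e^0 \in \del^0K} \lambda_{K,e^0}\, q^\Omega_{e_K^+}(\tu).
$$
Chaining this with the previous display and rearranging produces exactly
$$
q^\Omega_{e_K^+}(u_{e_K^+}) - q^\Omega_{e_K^+}(u_{e_K^-})
+ \sum_{e^0\in \del^0K} \left(Q^\Omega_{K,e^0}(u_{e_K^-},u_{e^-_{K_{e^0}}}) - Q^\Omega_{K,e^0}(u_{e_K^-},u_{e_K^-}) \right) \leq 0,
$$
which is the assertion. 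One should note that, as in Lemma~\ref{convdecflux}, every term here is well-defined because $e_K^+$ carries the orientation fixed in Definition~\ref{hyperb-def} and the total entropy flux functions $q^\Omega_{e_K^+}$ are therefore meaningful; the convex decomposition \eqref{convdec} has already been shown to be legitimate, so all the $\tu$ lie in the image of $q_{e_K^+}$.

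The argument is essentially bookkeeping, so there is no serious obstacle; the only point requiring a little care is the cancellation of the $1/\lambda_{K,e^0}$ weights, which relies precisely on the normalization $\sum \lambda_{K,e^0} = 1$ and on the fact that the "diagonal" terms $q^\Omega_{e_K^+}(u_{e_K^-})$ and $Q^\Omega_{K,e^0}(u_{e_K^-},u_{e_K^-})$ contribute a genuinely convex combination. If some $\lambda_{K,e^0}$ were to vanish one would simply drop that face from the sum (the corresponding contribution to \eqref{fvgm} being absent as well by the well-definedness discussion preceding Lemma~\ref{convdecflux}), so no difficulty arises there either.
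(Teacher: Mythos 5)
Your proof is correct and is exactly the argument the paper intends: the paper simply states "Combining the last two lemmas we get" this inequality, and your plan of summing the discrete entropy inequality \eqref{dei} against the weights $\lambda_{K,e^0}$ (using $\sum_{e^0}\lambda_{K,e^0}=1$ so the $1/\lambda_{K,e^0}$ factors cancel) and then bounding below via \eqref{eq:convdecflux} from Lemma~\ref{convdecflux} is precisely that combination. No gaps; the remark about vanishing weights is unnecessary but harmless.
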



Finally, we turn our discussion to the boundary of $M$. 

\begin{lemma}[Discrete boundary condition]\label{dbc}
For all convex entropy pairs $(U,\Omega)$ the following discrete boundary condition holds for each $e^0$: 
\begin{eqnarray*}
&& Q^\Omega_{K,e^0}\left({u_{e_K^-}},u_{e_{K_{e^0}}^-}\right) - Q^\Omega_{K,e^0}\left(u_{e_{K_{e^0}}^-},u_{e_{K_{e^0}}^-}\right)\\
&\geq& \del_uU(u_{e_{K_{e^0}}^-}) \left[ Q_{K,e^0} (u_{e_K^-},u_{e_{K_{e^0}}^-}) - Q_{K,e^0}(u_{e_{K_{e^0}}^-},u_{e_{K_{e^0}}^-})\right]. 
\end{eqnarray*}
\end{lemma}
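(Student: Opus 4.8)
The plan is to reduce the claimed inequality, which is a pointwise statement about the numerical entropy fluxes associated to Kruzkov's entropy family, to a convexity/monotonicity property of the basic numerical flux $Q_{K,e^0}$ combined with an integration-by-parts identity like the one used in the proof of Lemma~\ref{convdecflux}. Since the lemma is stated for all convex entropy pairs but the numerical entropy flux $Q^\Omega_{K,e^0}$ has only been constructed (in the proof of Lemma~\ref{num-ent-flux}) for Kruzkov's entropies $\Omegabf(\cdot,c)$, I would first establish the inequality for the family $\Omega=\Omegabf(\cdot,c)$, i.e.\ show
$$
\Qbf_{K,e^0}(u_{e_K^-},u_{e_{K_{e^0}}^-},c) - \Qbf_{K,e^0}(u_{e_{K_{e^0}}^-},u_{e_{K_{e^0}}^-},c)
\geq \sgn(u_{e_{K_{e^0}}^-}-c)\Big[ Q_{K,e^0}(u_{e_K^-},u_{e_{K_{e^0}}^-}) - Q_{K,e^0}(u_{e_{K_{e^0}}^-},u_{e_{K_{e^0}}^-})\Big],
$$
and then pass to general convex $(U,\Omega)$ by writing $U$ as a superposition of Kruzkov entropies, $\del_{uu}U \geq 0$, so that $\del_uU(b)-\del_uU(a)=\int \sgn(b-c)\,d\mu_U(c)$ with $\mu_U=\del_{uu}U\,dc$ a non-negative measure; integrating the Kruzkov inequality against $d\mu_U(c)$, with $b=u_{e_{K_{e^0}}^-}$, then yields the general statement after adding the appropriate consistency constant.

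For the Kruzkov case, abbreviate $a:=u_{e_K^-}$, $b:=u_{e_{K_{e^0}}^-}$. Using the definition $\Qbf_{K,e^0}(u,v,c)=Q_{K,e^0}(u\vee c,v\vee c)-Q_{K,e^0}(u\wedge c,v\wedge c)$, the left-hand side is
$$
\big[Q_{K,e^0}(a\vee c,b\vee c)-Q_{K,e^0}(a\wedge c,b\wedge c)\big] - \big[Q_{K,e^0}(b\vee c,b\vee c)-Q_{K,e^0}(b\wedge c,b\wedge c)\big].
$$
I would split into the two cases $b\geq c$ and $b<c$. When $b\geq c$ the terms $b\vee c=b$, $b\wedge c=c$, and the inequality to prove becomes
$$
Q_{K,e^0}(a\vee c,b)-Q_{K,e^0}(a\wedge c,c) - q_{e^0}(b)+q_{e^0}(c) \geq Q_{K,e^0}(a,b)-q_{e^0}(b),
$$
i.e.\ $Q_{K,e^0}(a\vee c,b)-Q_{K,e^0}(a\wedge c,c)+q_{e^0}(c)\geq Q_{K,e^0}(a,b)$, where I used consistency $Q_{K,e^0}(c,c)=q_{e^0}(c)$. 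If $a\geq c$ this reads $Q_{K,e^0}(a,b)-Q_{K,e^0}(c,c)+q_{e^0}(c)\geq Q_{K,e^0}(a,b)$, an equality; if $a<c$ it reads $Q_{K,e^0}(c,b)-Q_{K,e^0}(a,c)+q_{e^0}(c)\geq Q_{K,e^0}(a,b)$, which follows by writing $Q_{K,e^0}(c,b)-Q_{K,e^0}(a,b)\geq 0$ (monotonicity in the first slot since $c>a$) and $q_{e^0}(c)-Q_{K,e^0}(a,c)=Q_{K,e^0}(c,c)-Q_{K,e^0}(a,c)\geq0$ (same monotonicity). The case $b<c$ is entirely symmetric (both sides flip sign, and one uses $\sgn(b-c)=-1$), so it reduces to the same monotonicity facts.

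The passage to general convex entropies is then the cleanest step: integrate in $c$ against $d\mu_U(c)=\del_{uu}U(c)\,dc$. On the right-hand side $\int \sgn(b-c)\,d\mu_U(c) = \del_uU(b)-\del_uU(-\infty)$, which up to the (finite, by admissibility if needed, or by truncation otherwise) constant gives $\del_uU(b)$ times the bracket, matching the statement; on the left-hand side one recognizes $\int \Qbf_{K,e^0}(u,v,c)\,d\mu_U(c)$ as exactly the numerical entropy flux $Q^\Omega_{K,e^0}(u,v)$ defined for $\Omega$ in the proof of Lemma~\ref{num-ent-flux} (this is the standard Crandall--Majda / Kruzkov decomposition of a convex entropy flux, which is how $Q^\Omega_{K,e^0}$ is defined in the first place), so the two constants cancel and the inequality for $(U,\Omega)$ drops out. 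The main obstacle I anticipate is purely bookkeeping: keeping the orientation of $e^0$ as a face of $K$ versus $K_{e^0}$ straight, handling the boundary case where $K_{e^0}$ is fictitious (there $Q_{K_{e^0},e^0}(u,v):=-Q_{K,e^0}(v,u)$ by convention, and $u_{e_{K_{e^0}}^-}$ is the averaged boundary datum \eqref{boundarydata}), and making sure the $\sgn$ at the endpoints $b=c$ is treated consistently with the $\vee/\wedge$ conventions; none of these is deep but all must be done carefully so that the constants match exactly.
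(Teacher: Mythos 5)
Your argument is correct, but it takes a genuinely different route from the paper. The paper proves Lemma \ref{dbc} in three lines from machinery already in place: by Lemma \ref{convexities} (identity \eqref{444}) the map $q^\Omega_{e_K^+}\circ q^{-1}_{e_K^+}$ is convex, so the tangent-line inequality at $q_{e_K^+}(u_{e^-_{K_{e^0}}})$ together with the definition \eqref{baru} of $\bu$ gives a lower bound on $q^\Omega_{e_K^+}(\bu)-q^\Omega_{e_K^+}(u_{e^-_{K_{e^0}}})$ in terms of $\del_u U(u_{e^-_{K_{e^0}}})$ times the flux difference, and this is then sandwiched against the discrete boundary entropy inequality \eqref{49b} of Lemma \ref{num-ent-flux}. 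Your proof never touches the outflow face $e_K^+$, the auxiliary state $\bu$, or \eqref{49b}: you verify the inequality directly for Kruzkov entropies by a case analysis that uses only consistency and the monotonicity of $Q_{K,e^0}$ in its first slot (your case checks are correct, including the degenerate case $b=c$), and then pass to general convex $(U,\Omega)$ by the Crandall--Majda superposition $d\mu_U=\del_{uu}U\,dc$. What your route buys is that the lemma is revealed as a property of the monotone numerical flux alone, independent of the CFL condition, of the cell geometry, and of the total flux $q_{e_K^+}$; what the paper's route buys is brevity and independence from the explicit construction of $Q^\Omega$, since it only needs the \emph{conclusions} of Lemma \ref{num-ent-flux}. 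Two points in your write-up need tightening but do not break the argument: the paper constructs $Q^\Omega_{K,e^0}$ explicitly only for the Kruzkov family, so your identification of $\int \Qbf_{K,e^0}(u,v,c)\,d\mu_U(c)$ with $Q^\Omega_{K,e^0}(u,v)$ is an (entirely reasonable) interpretation of a definition the paper leaves implicit; and the representation of $\del_u U(b)$ via $\int\sgn(b-c)\,d\mu_U(c)$ carries a factor $\tfrac12$ plus an affine part (e.g.\ write $U$ as affine plus $\tfrac12\int|u-c|\,\mu_U(dc)$), so you must check separately that the affine contribution gives equality in the claimed inequality --- it does, since for $U(u)=\alpha u$ both sides equal $\alpha\,[\,Q_{K,e^0}(u_{e_K^-},u_{e^-_{K_{e^0}}})-Q_{K,e^0}(u_{e^-_{K_{e^0}}},u_{e^-_{K_{e^0}}})\,]$ --- after which integrating your pointwise Kruzkov inequality against the non-negative measure $\mu_U$ finishes the proof.
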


\begin{proof}
According to Lemma \ref{convexities} the function $q^\Omega_{e_K^+} \circ q^{-1}_{e_K^+}$ is convex so we have using \eqref{baru}
$$
\aligned
& q^\Omega_{e_K^+}(\bu) - q^\Omega_{e_K^+} (u_{e_{K_{e^0}}^-})\\
& \geq \left( q^\Omega_{e_K^+} \circ q^{-1}_{e_K^+} \right)' (q_{e_K^+}(u_{e_{K_{e^0}}^-})) \left( q_{e_K^+}(\bu) - q_{e_K^+} (u_{e_{K_{e^0}}^-})\right)\\
& = \frac{1}{\len} \left( q^\Omega_{e_K^+} \circ q^{-1}_{e_K^+} \right)' (q_{e_K^+}(u_{e_{K_{e^0}}^-}))
                                              \left( Q_{K,e^0}(u_{e_K^-},u_{e^-_{K_{e^0}}}) -Q_{K,e^0}(u_{e^-_{K_{e^0}}},u_{e^-_{K_{e^0}}}) \right)\\
& = \frac{1}{\len} (\del_u U) (u_{e_{K_{e^0}}^-}) \left( Q_{K,e^0}(u_{e_K^-},u_{e^-_{K_{e^0}}}) -Q_{K,e^0}(u_{e^-_{K_{e^0}}},u_{e^-_{K_{e^0}}}) \right).
\endaligned
$$
Combining this inequality with \eqref{49b} yields the assertion of the lemma.
\end{proof} 


\section{Global estimates and convergence analysis}

\subsection{Global entropy bounds and inequalities}

We now establish a global bound on the ``discrete derivatives'' of the approximate solutions.
To state it we introduce the following notation:

\be
\aligned
\del \Tau^0 &= \{ e^0 \in \del^0K | K \in \Tau,  e^0 \subset \del^0 M\},\quad
&&&\Tau_j &= \{ K \in \Tau |  e_K^- \subset \Hcal_j\},\\
\del \Tau^0_j &= \{ e^0 \in \del^0K | K \in \Tau_j,  e^0 \subset \del^0 M\},\quad 
&&&\del \Tau_j &= \{ e^0 \in \del^0 K | K \in \Tau_j\}.
\endaligned
\ee

\begin{lemma}[Global entropy dissipation estimate]\label{gede}
The following estimate for the entropy dissipation holds
\be
\aligned  & \sum_{K \in \Tau_j} q_{e_K^+}^\Omega(u_{e_K^+}) + c \sum_{e^0 \in \del \Tau_j }\lambda_{K,e^0}  
\frac{(\inf_u \del_u q_{e_K^+})^2}{\sup_u \del_u q_{e_K^+}} | \tilde u_{K,e^0} - u_{e_K^+} |^2\\
\leq& - \sum_{e^0 \in \del \Tau^0_j} Q^\Omega_{K,e^0} (u_{e_K^-}, u_{e_{K_{e^0}}^-}) + \sum_{ K \in \Tau_j } q_{e_K^-}^\Omega(u_{e_K^-}) ,
\endaligned
\ee
where $2c$ is a modulus of convexity of $U$ and $\Omega$ is the associated entropy flux field.
When $\Omega$ is the $n$-form entropy flux field associated to the  entropy $U(u)=u^2,$ we have
\be 
\aligned& \sum_{e^0 \in \del \Tau_j }\lambda_{K,e^0} \frac{(\inf_u \del_u q_{e_K^+})^2}{\sup_u \del_u q_{e_K^+}}  | \tilde u_{K,e^0} - u_{e_K^+} |^2\\
\leq& - \sum_{e^0 \in \del \Tau_j^0} Q^\Omega_{K,e^0} (u_{e_K^-}, u_{e_{K_{e^0}}^-}) + \sum_{K \in \Tau_j} q_{e_K^-}^\Omega(u_{e_K^-}) .
\endaligned
\ee
\end{lemma}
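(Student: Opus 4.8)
\textbf{Proof plan for Lemma~\ref{gede}.}
The plan is to sum the per-cell entropy inequality (the ``Entropy inequality per cell'' Lemma, which follows from Lemmas~\ref{convdecflux} and \ref{num-ent-flux}) over all cells $K \in \Tau_j$, and then to keep track of the quadratic dissipation term that was discarded when passing from the convex decomposition \eqref{convdec} to the inequality \eqref{eq:convdecflux}. First I would revisit the proof of Lemma~\ref{convdecflux}: the non-negative term there is
$\sum_{e^0} \len \int_{\tu}^{\uep} \del_{uu} U(v) \big( i^*\omega(v) - i^*\omega(\tu) \big)\, dv$,
integrated over $e_K^+$. Using the modulus of convexity $\del_{uu} U \geq 2c$ and the positivity of $i^*\del_u\omega$ on $e_K^+$, one bounds this from below by a constant times $\sum_{e^0} \len \,|q_{e_K^+}(\tu) - q_{e_K^+}(\uep)|^2 / (\text{a Lipschitz constant})$; converting back from the flux variable $q$ to the physical variable $u$ via the bounds $\inf_u \del_u q_{e_K^+} \leq \del_u q_{e_K^+} \leq \sup_u \del_u q_{e_K^+}$ produces exactly the factor $\lambda_{K,e^0} (\inf_u \del_u q_{e_K^+})^2 / \sup_u \del_u q_{e_K^+}$ multiplying $|\tu - u_{e_K^+}|^2$. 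So Lemma~\ref{convdecflux} should be upgraded to the sharper statement
$q^\Omega_{e_K^+}(u_{e_K^+}) + c\sum_{e^0} \lambda_{K,e^0} \frac{(\inf_u \del_u q_{e_K^+})^2}{\sup_u \del_u q_{e_K^+}} |\tu - u_{e_K^+}|^2 \leq \sum_{e^0} \lambda_{K,e^0} q^\Omega_{e_K^+}(\tu)$.

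Next I would combine this with the discrete entropy inequality \eqref{dei}, which controls each $q^\Omega_{e_K^+}(\tu)$ by $q^\Omega_{e_K^+}(u_{e_K^-})$ plus numerical entropy flux differences; multiplying \eqref{dei} by $\lambda_{K,e^0}$, summing over $e^0 \in \del^0 K$, and using $\sum_{e^0}\lambda_{K,e^0}=1$ gives the per-cell inequality
$q^\Omega_{e_K^+}(u_{e_K^+}) + c\sum_{e^0}\lambda_{K,e^0}\frac{(\inf_u \del_u q_{e_K^+})^2}{\sup_u \del_u q_{e_K^+}}|\tu-u_{e_K^+}|^2 \leq q^\Omega_{e_K^+}(u_{e_K^-}) - \sum_{e^0}\big(Q^\Omega_{K,e^0}(u_{e_K^-},u_{e^-_{K_{e^0}}}) - Q^\Omega_{K,e^0}(u_{e_K^-},u_{e_K^-})\big)$.
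Then I would sum over $K \in \Tau_j$. The term $\sum_K q^\Omega_{e_K^+}(u_{e_K^-})$ requires the consistency property $Q^\Omega_{K,e^0}(\ubar,\ubar) = \int_{e^0} i^*\Omega(\ubar)$ together with geometry-compatibility: by Stokes' theorem applied to $d(\Omega(u_{e_K^-}))=0$ on $K$ (as in the derivation of \eqref{excons}, using \eqref{dOmega}), one has $q^\Omega_{e_K^+}(u_{e_K^-}) = q^\Omega_{e_K^-}(u_{e_K^-}) - \sum_{e^0} Q^\Omega_{K,e^0}(u_{e_K^-},u_{e_K^-})$, which exactly cancels the diagonal numerical-entropy-flux terms and replaces $q^\Omega_{e_K^+}(u_{e_K^-})$ by $q^\Omega_{e_K^-}(u_{e_K^-})$.

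After this substitution the sum over $K\in\Tau_j$ of the off-diagonal fluxes $-\sum_{e^0}Q^\Omega_{K,e^0}(u_{e_K^-},u_{e^-_{K_{e^0}}})$ splits into interior vertical faces and boundary vertical faces. For an interior face $e^0$ shared by $K$ and $K_{e^0}$, the conservation property \eqref{entrcons}, $Q^\Omega_{K,e^0}(\ubar,\vbar) = -Q^\Omega_{K_{e^0},e^0}(\vbar,\ubar)$, makes the two contributions cancel telescopically, leaving only $-\sum_{e^0\in\del\Tau^0_j}Q^\Omega_{K,e^0}(u_{e_K^-},u_{e^-_{K_{e^0}}})$ over faces on $\del^0 M$. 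This yields the first displayed estimate. The second estimate is the special case $U(u)=u^2$, for which $2c = 2$ hence $c=1$; it is immediate once one notes that for this entropy the associated $q^\Omega_{e_K^+}(u_{e_K^+}) \geq 0$ on a spacelike face (indeed $i^*\Omega \geq 0$ for $U(u)=u^2$ follows as in Lemma~\ref{convexities}, since $\Omega(\ubar)=\int_0^{\ubar} 2v\,\del_u\omega(v)\,dv$), so the first term on the left of the first estimate may be dropped. The main obstacle is bookkeeping: correctly upgrading Lemma~\ref{convdecflux} to retain the quadratic term with the precise constant $c\,\lambda_{K,e^0}(\inf\del_u q)^2/\sup\del_u q$, and making sure the Stokes/geometry-compatibility identity for $\Omega$ is invoked with the right orientations so that the diagonal flux terms cancel cleanly; the telescoping over interior faces and the restriction to $\del\Tau^0_j$ is then routine.
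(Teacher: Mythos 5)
Your proposal is correct and follows essentially the same route as the paper's proof: multiply the discrete entropy inequality \eqref{dei} by $\lambda_{K,e^0}$ and sum over $\Tau_j$, cancel the diagonal numerical entropy fluxes by applying Stokes' theorem to $d(\Omega(u_{e_K^-}))=0$ (geometry compatibility \eqref{dOmega}), telescope the interior vertical faces via conservation \eqref{entrcons} so that only $\del\Tau^0_j$ survives, retain the quadratic convexity remainder giving the factor $c\,(\inf_u \del_u q_{e_K^+})^2/\sup_u \del_u q_{e_K^+}$, and drop $q^\Omega_{e_K^+}(u_{e_K^+})\geq 0$ for $U(u)=u^2$. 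The only cosmetic difference is where the quadratic term is extracted: the paper applies the convex function $q^\Omega_{e_K^+}\circ q_{e_K^+}^{-1}$ to the convex decomposition \eqref{convdec} and bounds its modulus of convexity by $c/\sup_u \del_u q_{e_K^+}$ via the identity \eqref{444}, whereas you keep the second-order remainder directly in the integration-by-parts argument of Lemma \ref{convdecflux}; both yield the same constant.
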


\begin{proof}
We multiply \eqref{dei} by $\lambda_{K,e^0}$ and sum over all $e^0_K \in \del^0K$ and all $K \in \Tau_j$.
Due to \eqref{entrcons} this yields
\be \label{511}
\aligned
&\sum_{e^0 \in \del \Tau_j } \lambda_{K,e^0} q^\Omega_{e_K^+}(\tu) - \sum_{K \in \Tau_j}q^\Omega_{e_K^+} (u_{e_K^-})\\
+& \sum_{e^0 \in \del \Tau^0_j} Q^\Omega_{K,e^0}(u_{e_K^-},u_{e^-_{K_{e^0}}}) - \sum_{e^0 \in \del \Tau_j } Q^\Omega_{K,e^0}(u_{e_K^-},u_{e_K^-}) \leq 0. 
\endaligned
\ee
Applying $q^\Omega_{e_K^+} \circ q_{e_K^+}^{-1}$  to the convex decomposition \eqref{convdec} yields
\be\label{512}
\aligned
& q_{e_K^+}^\Omega(u_{e_K^+}) + \underbrace{\beta \sum_{e^0 \in \del^0K} \len | q_{e_K^+}(\tu) - q_{e_K^+}(u_{e_K^+}) | ^2}_{=:A}
& \leq \sum_{e^0 \in \del^0K} \len q_{e_K^+}^\Omega(\tu),
\endaligned
\ee
where $2\beta$ is a modulus of convexity of $q^\Omega_{e_K^+} \circ q_{e_K^+}^{-1}$.
Using  Lemma \ref{convexities} we have on the one hand
$$
\del_{qq} (q^\Omega_{e_K^+} \circ q_{e_K^+}^{-1}) = (\del_{uu} U \circ q_{e_K^+}^{-1} ) \cdot \left( \frac{1}{\del_u q_{e_K^+} \circ q_{e_K^+}^{-1}} \right),
$$
which implies 
$
\beta \geq \frac{c}{\sup_u \del_u q_{e_K^+}}.
$
On the other hand 
$$
| q_{e_K^+}(\tu) - q_{e_K^+}(u_{e_K^+}) | ^2 \geq ( \inf_u \del_u q_{e_K^+})^2 | \tu - u_{e_K^+} |^2.
$$
So we have
\begin{equation}\label{newlabel}
A \geq c\sum_{e^0 \in \del^0K} \len \frac{(\inf_u \del_u q_{e_K^+})^2}{\sup_u \del_u q_{e_K^+}} | \tu - u_{e_K^+} |^2.
\end{equation}
Combining \eqref{511}, \eqref{512}, and \eqref{newlabel} we get
\begin{equation*}
\aligned
& \sum_{K \in \Tau_j} q_{e^+_K}^\Omega (u_{e_K^+}) + 
c \sum_{e^0 \in \del \Tau_j} \lambda_{K,e^0} \frac{(\inf_u \del_u q_{e_K^+})^2}{\sup_u \del_u q_{e_K^+}}|\tu - u_{e_K^+}|^2 
- \sum_{K \in \Tau_j}q_{e_K^+}^\Omega(u_{e_K^-})\\
&\leq -\sum_{e^0 \in \del \Tau^0_j} Q^\Omega_{K,e^0}(u_{e_K^-},u_{e^-_{K_{e^0}}}) 
+ \sum_{e^0 \in \del \Tau_j} Q^\Omega_{K,e^0}(u_{e_K^-},u_{e_K^-}).
\endaligned
\end{equation*}
Because of \eqref{dOmega} Stokes Theorem implies for every $K \in \Tau$
$$
-q_{e^+_K}^\Omega (u_{e_K^-})
 + q_{e^-_K}^\Omega (u_{e_K^-}) = \sum_{e^0 \in \del^0K} Q^\Omega_{K,e^0}(u_{e_K^-},u_{e_K^-}).
 $$
Inserting this in the last inequality yields
\begin{eqnarray}\label{ged1}
&& \sum_{K \in \Tau_j} q_{e^+_K}^\Omega (u_{e_K^+}) 
+ c \sum_{e^0 \in \del \Tau_j} \lambda_{K,e^0} \frac{(\inf_u \del_u q_{e_K^+})^2}{\sup_u \del_u q_{e_K^+}}|\tu - u_{e_K^+}|^2 
\\
&\leq& -\sum_{e^0\in \del \Tau_j^0} Q^\Omega_{K,e^0}(u_{e_K^-},u_{e^-_{K_{e^0}}}) + \sum_{K \in \Tau_j} q_{e^-_K}^\Omega (u_{e_K^-}). \nonumber
\end{eqnarray} 
This proves the first assertion of the Lemma.
Noting that for $U(u)=u^2$ we have
$$
 q_{e^+_K}^\Omega (u_{e_K^+}) \geq 0
 $$
yields the second assertion.
\end{proof}

Let us choose a finite number of charts covering $M$ (once and for all, independent of the triangulation) in such a way that (for any sufficiently fine triangulation) each element of our triangulation is contained in one chart domain.
These charts induce $n$-forms $\alpha_{e^0}$ corresponding to Hausdorff measures on the faces $e^0$. 
Then we define for any $\psi \in \Dcal(M)$ and cell $K\in \Tau$ with vertical face $e^0\in \del^0K$
$$
\psi_{e^0} := \frac{\int_{e^0} \psi \alpha_{e^0}}{\int_{e^0}\alpha_{e^0}}, \quad \psi_{\del^0K}:= \sum_{e^0 \in \del^0K} \len \psi_{e^0}.
$$


\begin{lemma}[Global entropy inequalities]\label{52}
Let $\Omega$ be a convex entropy flux field and $\psi \in \Dcal(M)$ a non-negative test-function supported in $M \setminus \Hcal_T$. 
Then the finite volume approximation satisfies the following global entropy inequality
$$
\aligned
&- \sum_{K \in \Tau} \int_K d(\psi \Omega)(u_{e_K^-}) - \sum_{K \in \Tau_0} \int_{e_K^-} \psi i^*\Omega(u_{e_K^-})
  + \sum_{e^0 \in \del \Tau^0}  \psi_{e^0} Q^\Omega_{K,e^0}(u_{e_K^-},u_{e_{K_{e^0}}^-})\\
& \leq A(\psi) + B(\psi) + C(\psi)+ D(\psi) + E(\psi),
\endaligned
$$
with 
$$
\aligned
&A(\psi):= \sum_{K \in \Tau, e^0 \in \del^0K} \len (\psi_{\del^0K}-\psi_{e^0}) \left( q_{e_K^+}^\Omega(\tu)-q_{e_K^+}^\Omega(u_{e_K^+})\right),\\
&B(\psi):= \sum_{K \in \Tau, e^0 \in \del^0K} \int_{e^0} (\psi_{e^0}-\psi) i^*\Omega(u_{e_K^-}),\\
&C(\psi):= - \sum_{K \in  \Tau, e^0 \in \del^0K} \int_{e_K^+}\len (\psi_{\del^0K} - \psi)\left( i^*\Omega(\tu) - i^*\Omega(u_{e_K^+})\right),\\
&D(\psi):= -\sum_{K \in \Tau,e^0 \in \del^0K} \int_{e_K^+} \lambda_{K,e^0} \psi \del_u U(u_{e_K^+}) ( i^* \omega (\tu) - i^*\omega(u_{e_K^+})),\\
&E(\psi):= - \sum_{K \in  \Tau} \int_{e_K^+} (\psi_{\del^0K} - \psi)\left( i^*\Omega(u_{e_K^+}) - i^*\Omega(u_{e_K^-})\right).
\endaligned
$$
\end{lemma}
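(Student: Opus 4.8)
The plan is to derive the global entropy inequality by summing the per-cell entropy inequality (from Lemma~\ref{num-ent-flux}, third assertion \eqref{dei}) against the cell-averaged test-function values, and then carefully rewriting each term so that the continuous differential-form quantities $d(\psi\Omega)(u_{e_K^-})$ and the boundary integrals emerge, with all the discrepancies collected into the error terms $A(\psi),\dots,E(\psi)$.

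First I would multiply \eqref{dei} by $\lambda_{K,e^0}\,\psi_{e^0}$ (recall $\psi_{e^0}\ge 0$ since $\psi\ge 0$) and sum over $e^0\in\del^0K$ and $K\in\Tau$. On the left one gets $\sum_K \psi_{\del^0 K}\, q^\Omega_{e_K^+}(u_{e_K^-})$ minus $\sum_{K,e^0}\lambda_{K,e^0}\psi_{e^0} q^\Omega_{e_K^+}(\tu)$, plus the flux terms $\sum_{K,e^0}\psi_{e^0}\big(Q^\Omega_{K,e^0}(u_{e_K^-},u_{e^-_{K_{e^0}}}) - Q^\Omega_{K,e^0}(u_{e_K^-},u_{e_K^-})\big)$. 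The key algebraic manipulations are then: (i) in the term $\sum_{K,e^0}\lambda_{K,e^0}\psi_{e^0}q^\Omega_{e_K^+}(\tu)$, replace $\psi_{e^0}$ by $\psi_{\del^0K}$ at the cost of $A(\psi)$ and use the per-cell entropy inequality of the preceding Lemma (Lemma "Entropy inequality per cell") in the form $\sum_{e^0}\lambda_{K,e^0}q^\Omega_{e_K^+}(\tu)\ge q^\Omega_{e_K^+}(u_{e_K^+})$, so that $\sum_{K,e^0}\lambda_{K,e^0}\psi_{\del^0K}q^\Omega_{e_K^+}(\tu) = \sum_K \psi_{\del^0K}\sum_{e^0}\lambda_{K,e^0}q^\Omega_{e_K^+}(\tu)$; (ii) convert $q^\Omega_{e_K^+}(u_{e_K^+})$ and $q^\Omega_{e_K^+}(u_{e_K^-})$ back into surface integrals $\int_{e_K^+}i^*\Omega(\cdot)$ and, via Stokes' theorem together with \eqref{dOmega} as was done in the proof of Lemma~\ref{gede}, relate these to $\int_K d(\psi\Omega)(u_{e_K^-})$; (iii) on the flux terms use conservation \eqref{entrcons} to cancel contributions from interior vertical faces (where $\psi_{e^0}$ is single-valued since $\alpha_{e^0}$ is chosen consistently), leaving only the boundary sum over $\del\Tau^0$ and the initial-slice term over $\Tau_0$; the top slice $\Hcal_T$ contributes nothing because $\supp\psi\subset M\setminus\Hcal_T$.

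The error terms arise as follows. $B(\psi)$ is the difference between $\psi_{e^0}\int_{e^0}i^*\Omega(u_{e_K^-})$ and $\int_{e^0}\psi\, i^*\Omega(u_{e_K^-})$ appearing when one writes $q^\Omega_{e^0}$-type quantities against averaged versus pointwise test-functions along vertical faces. $C(\psi)$ and $E(\psi)$ come from the telescoping across spacelike faces: when converting $\sum_K\big(\psi_{\del^0K}q^\Omega_{e_K^+}(u_{e_K^+}) - (\text{neighbour's }\psi_{\del^0{K'}})q^\Omega_{e_{K'}^-}(\cdot)\big)$ into $\int d(\psi\Omega)$, one must replace $\psi_{\del^0K}$ by $\psi$ on the face $e_K^+$, generating $E(\psi)$, and replace $q^\Omega_{e_K^+}(\tu)$-weighted averages by pointwise integrals, generating $C(\psi)$. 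Finally $D(\psi)$ is the genuinely nonlinear discrepancy: $q^\Omega_{e_K^+}(u_{e_K^+})$ is \emph{not} equal to $\sum_{e^0}\lambda_{K,e^0}q^\Omega_{e_K^+}(\tu)$ but only $\le$, and the exact gap, after integrating by parts in $u$ exactly as in the proof of Lemma~\ref{convdecflux}, has leading order $-\int_{e_K^+}\sum_{e^0}\lambda_{K,e^0}\psi\,\del_u U(u_{e_K^+})(i^*\omega(\tu)-i^*\omega(u_{e_K^+}))$, which is $D(\psi)$; the remaining second-order-in-$U$ piece is non-negative (by convexity and positivity of $i^*\del_u\omega$) and hence can be dropped, preserving the inequality direction.

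The main obstacle I expect is bookkeeping the orientations and the telescoping of the spacelike-face terms across neighbouring cells so that the sum collapses exactly to $\sum_K\int_K d(\psi\Omega)(u_{e_K^-})$ plus the single initial-data term over $\Tau_0$ — in particular making sure that the test-function averages $\psi_{\del^0 K}$ on a shared spacelike face from the two adjacent cells are reconciled (they differ, which is precisely why $E(\psi)$ is needed) and that each interior vertical face's flux contribution cancels cleanly using \eqref{entrcons} and the single-valuedness of $\psi_{e^0}$. Everything else — the integration by parts in $u$, the Stokes identities, and the sign of the dropped quadratic term — is routine given Lemmas~\ref{convexities}, \ref{convdecflux}, \ref{num-ent-flux} and the per-cell entropy inequality.
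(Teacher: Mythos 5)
Your proposal follows essentially the same route as the paper's proof: multiply the face entropy inequality \eqref{dei} by $\lambda_{K,e^0}\psi_{e^0}$ and sum, apply Stokes' theorem to $\psi\,\Omega(u_{e_K^-})$ on each cell, cancel interior vertical fluxes by conservation \eqref{entrcons}, telescope the spacelike faces using $\operatorname{supp}\psi\subset M\setminus\Hcal_T$, and absorb the average-versus-pointwise test-function discrepancies and the convex-decomposition gap (via the integration by parts in $u$ from Lemma \ref{convdecflux}, dropping the non-negative remainder) into $A,\dots,E$, exactly as the paper does. One cosmetic remark: \eqref{dOmega} is not actually needed at this stage --- the per-cell identity is just Stokes applied to $\psi\,\Omega(u_{e_K^-})$, which is why $\int_K d(\psi\Omega)(u_{e_K^-})$ remains in the statement --- and the inequality $\sum_{e^0}\lambda_{K,e^0}q^\Omega_{e_K^+}(\tu)\ge q^\Omega_{e_K^+}(u_{e_K^+})$ you invoke is Lemma \ref{convdecflux} rather than the per-cell lemma, but neither point affects the argument.
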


\begin{proof}
 By Stokes theorem we have for each cell $K$
\bel{521}
 - \int_K d(\psi \Omega)(u_{e_K^-}) + \int_{e_K^+} \psi i^* \Omega(u_{e_K^-}) - \int_{e_K^-} \psi i^* \Omega(u_{e_K^-})
 +\sum_{e^0 \in \del^0K} \int_{e^0} \psi i^* \Omega(u_{e_K^-}) =0.
\ee
Multiplying \eqref{dei} by $\psi_{e^0} \len$ and summing over $e^0 \in \del^0K$ we have for each cell $K$
\bel{522}
 \aligned
 & \sum_{e^0 \in \del^0K} \len \psi_{e^0} q_{e_K^+}^\Omega(\tu) - \sum_{e^0 \in \del^0K} \len \psi_{e^0} q_{e_K^+}^\Omega(u_{e_K^-})&\\
 & +\sum_{e^0 \in \del^0K}  \psi_{e^0} \left(Q^\Omega_{K,e^0}(u_{e_K^-},u_{e_{K_{e^0}}^-}) - Q^\Omega_{K,e^0}(u_{e_K^-},u_{e_K^-})\right)& \leq 0.
\endaligned
\ee
Furthermore, we have 
\bel{523}
  \psi_{\del^0K} q^\Omega_{e_K^+}(u_{e_K^-})- 
  \sum_{e^0 \in \del^0K }  \psi_{\del^0K} \len q^\Omega_{e_K^+}(u_{e_K^-})= 0,
\ee
where we used $\sum_{e^0 \in \del^0K} \len=1$.
Adding \eqref{521}, \eqref{522} and \eqref{523} and summing over all $K \in \Tau$ yields
\bel{524}
 \aligned
&- \sum_{K \in \Tau} \int_K d(\psi \Omega)(u_{e_K^-}) + \sum_{K \in \Tau} \int_{e_K^+} \psi i^*\Omega(u_{e_K^-})
- \sum_{K \in \Tau} \int_{e_K^-} \psi i^*\Omega(u_{e_K^-})&\\
&-\sum_{K \in \Tau, e^0 \in \del^0K} \len (\psi_{\del^0K}-\psi_{e^0}) \left( q_{e_K^+}^\Omega(\tu)-q_{e_K^+}^\Omega(u_{e_K^-})\right)&\\
& -\sum_{K \in \Tau, e^0 \in \del^0K} \int_{e^0} (\psi_{e^0}-\psi) i^*\Omega(u_{e_K^-}) + \sum_{e^0 \in \del \Tau^0}  \psi_{e^0} Q^\Omega_{K,e^0}(u_{e_K^-},u_{e_{K_{e^0}}^-})&\\
&+ \sum_{K \in \Tau} \psi_{\del^0K} \left(\sum_{e^0 \in \del^0K}\len q_{e_K^+}^\Omega(\tu) - q_{e_K^+}^\Omega(u_{e_K^-}) \right) \leq 0,
\endaligned
\ee
where we used
$
\sum_{K \in \Tau, e^0 \in \del^0K} \psi_{e^0} Q_{K,e^0}^\Omega (u_{e_K^-},u_{e_{K_{e^0}}^-}) 
= \sum_{e^0 \in \del \Tau^0}  \psi_{e^0} Q^\Omega_{K,e^0}(u_{e_K^-},u_{e_{K_{e^0}}^-})
$
due to the conservation property \eqref{entrcons}.
We observe that 
\be\label{525}
\aligned
\sum_{K \in \Tau} \int_{e_K^-} \psi i^*\Omega(u_{e_K^-}) 
= 
&\sum_{K \in \Tau}\int_{e_K^+} \psi i^*\Omega(u_{e_K^+})
   +\sum_{K \in \Tau_0} \int_{e_K^-} \psi i^*\Omega(u_{e_K^-})\\
   \leq & \sum_{K \in \Tau, e^0 \in \del^0K }\int_{e_K^+}\len \psi i^*\Omega(\tu)
   +\sum_{K \in \Tau_0} \int_{e_K^-} \psi i^*\Omega(u_{e_K^-})\\
   &- \sum_{K \in \Tau, e^0 \in \del^0 K} \int_{e_K^+} \lambda_{K, e^0} \psi \del_u U(u_{e_K^+}) ( i^* \omega (\tu) - i^*\omega(u_{e_K^+}))
\endaligned
\ee
similar to the derivation of \eqref{eq:convdecflux}.
Finally inserting \eqref{525} in \eqref{524} yields
\be\label{526}
\aligned
&- \sum_{K \in \Tau} \int_K d(\psi \Omega)(u_{e_K^-}) - \sum_{K \in \Tau_0} \int_{e_K^-} \psi i^*\Omega(u_{e_K^-})
\\
& + \sum_{e^0 \in \del \Tau^0}  \psi_{e^0} Q^\Omega_{K,e^0}(u_{e_K^-},u_{e_{K_{e^0}}^-})\\
& + \sum_{ K \in \Tau, e^0 \in \del^0K} \int_{e_K^+}\len (\psi_{\del^0 K} - \psi) \left( i^* \Omega (\tu) - i^*\Omega(u_{e_K^-}) \right)\\
&+ \sum_{K \in \Tau,e^0 \in \del^0K} \int_{e_K^+} \lambda_{K,e^0} \psi \del_u U(u_{e_K^+}) ( i^* \omega (\tu) - i^*\omega(u_{e_K^+}))
 \leq A(\psi) + B(\psi),
\endaligned
\ee
where we have used that
\begin{equation}
A(\psi)= \sum_{K \in \Tau, e^0 \in \del^0K} \len (\psi_{\del^0K}-\psi_{e^0}) \left( q_{e_K^+}^\Omega(\tu)-q_{e_K^+}^\Omega(u_{e_K^-})\right)
\ee
since 
for all $K \in \Tau$
$$
\sum_{e^0 \in \del^0K} \len (\psi_{\del^0K}- \psi_{e^0}) q^\Omega_{e_K^+} (u_{e_K^-}) =\sum_{e^0 \in \del^0K} \len (\psi_{\del^0K}- \psi_{e^0}) q^\Omega_{e_K^+} (u_{e_K^+}) =0.
$$ 
Using the definitions of $C(\psi), D(\psi), E(\psi)$ the assertion of the lemma follows from \eqref{526}.
\end{proof}

An easy consequence of Lemmas \ref{52} and \ref{dbc} is as follows. 

\begin{lemma}
\label{53}
Let $(U,\Omega)$ be an admissible, convex entropy pair and $\psi \in \Dcal(M)$ a non-negative test-function compactly supported in $M \setminus \Hcal_T$. 
Then the finite volume approximation satisfies the following global entropy inequality

\bel{53:eq}
\aligned
&- \sum_{K \in \Tau} \int_K d(\psi \Omega)(u_{e_K^-}) - \sum_{K \in \Tau_0} \int_{e_K^-} \psi i^*\Omega(u_{e_K^-})
+ \sum_{e^0 \in \del \Tau^{0}} \int_{e^0} \psi_{e^0} i^*\Omega(u_{e_{K_{e^0}}^-}) \\
&+ \sum_{e^0 \in \del \Tau^{0}}\psi_{e^0} \del_uU(u_{e_{K_{e^0}}^-}) \left( Q_{K,e^0}(u_{e_K^-},u_{e_{K_{e^0}}^-}) - Q_{K,e^0}(u_{e_{K_{e^0}}^-},u_{e_{K_{e^0}}^-}) \right)\\
& \leq A(\psi) + B(\psi) + C(\psi) + D(\psi) + E(\psi), 
\endaligned
\ee
where $A(\psi),B(\psi),C(\psi),D(\psi), E(\psi)$ were defined in Lemma \ref{52}.
\end{lemma}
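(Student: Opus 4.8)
The plan is to combine the ``interior'' global entropy inequality of Lemma~\ref{52} with the discrete boundary condition of Lemma~\ref{dbc}, applied term by term on the vertical boundary faces. I would start from the inequality \eqref{53:eq} I want to prove and work backwards: the left-hand side of \eqref{53:eq} differs from the left-hand side of the inequality in Lemma~\ref{52} only in the way the vertical-boundary contribution $\sum_{e^0 \in \del\Tau^0}$ is written. In Lemma~\ref{52} this contribution appears as $\psi_{e^0} Q^\Omega_{K,e^0}(u_{e_K^-},u_{e_{K_{e^0}}^-})$, while in Lemma~\ref{53} it is replaced by the sum of $\int_{e^0}\psi_{e^0} i^*\Omega(u_{e_{K_{e^0}}^-})$ and the term weighted by $\del_u U(u_{e_{K_{e^0}}^-})$. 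So the heart of the matter is to show, for each individual boundary face $e^0 \in \del\Tau^0$, that
\[
\int_{e^0}\psi_{e^0} i^*\Omega(u_{e_{K_{e^0}}^-})
+ \psi_{e^0}\del_u U(u_{e_{K_{e^0}}^-})\bigl( Q_{K,e^0}(u_{e_K^-},u_{e_{K_{e^0}}^-}) - Q_{K,e^0}(u_{e_{K_{e^0}}^-},u_{e_{K_{e^0}}^-})\bigr)
\le \psi_{e^0}\, Q^\Omega_{K,e^0}(u_{e_K^-},u_{e_{K_{e^0}}^-}).
\]

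To get this, I would first use the consistency property $Q^\Omega_{K,e^0}(\ubar,\ubar)=\int_{e^0} i^*\Omega(\ubar)$ to rewrite $\int_{e^0}\psi_{e^0} i^*\Omega(u_{e_{K_{e^0}}^-})$ as $\psi_{e^0} Q^\Omega_{K,e^0}(u_{e_{K_{e^0}}^-},u_{e_{K_{e^0}}^-})$. After this substitution the desired face-inequality is exactly
\[
Q^\Omega_{K,e^0}(u_{e_K^-},u_{e_{K_{e^0}}^-}) - Q^\Omega_{K,e^0}(u_{e_{K_{e^0}}^-},u_{e_{K_{e^0}}^-})
\ge \del_u U(u_{e_{K_{e^0}}^-})\bigl( Q_{K,e^0}(u_{e_K^-},u_{e_{K_{e^0}}^-}) - Q_{K,e^0}(u_{e_{K_{e^0}}^-},u_{e_{K_{e^0}}^-})\bigr),
\]
which is precisely the content of Lemma~\ref{dbc}. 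Since $\psi$ is non-negative, $\psi_{e^0}\ge 0$, so multiplying the face-inequality by $\psi_{e^0}$ preserves the direction, and summing over all $e^0 \in \del\Tau^0$ yields that the new left-hand side of \eqref{53:eq} is bounded above by the left-hand side of the Lemma~\ref{52} inequality. Since the right-hand side $A(\psi)+B(\psi)+C(\psi)+D(\psi)+E(\psi)$ is unchanged, chaining the two inequalities gives \eqref{53:eq}.

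I do not expect any serious obstacle here: the lemma is, as the text says, ``an easy consequence,'' and the only points requiring care are bookkeeping ones — namely making sure the orientation of $e^0$ used in the consistency property matches the orientation used throughout (so that $Q^\Omega_{K,e^0}(\ubar,\ubar)$ really equals $\int_{e^0} i^*\Omega(\ubar)$ with the correct sign, as fixed before \eqref{excons}), and checking that the identification of the boundary-face index set in Lemma~\ref{52} (the $\sum_{e^0\in\del\Tau^0}\psi_{e^0} Q^\Omega_{K,e^0}(u_{e_K^-},u_{e_{K_{e^0}}^-})$ term) is the same set that appears in \eqref{53:eq}. The mild subtlety is that for a genuine boundary face $e^0\subset\del^0 M$ the ``cell'' $K_{e^0}$ is fictitious and $u_{e_{K_{e^0}}^-}=u_{e^0}$ is the averaged boundary datum defined in \eqref{boundarydata}; one should note that Lemma~\ref{dbc} is stated precisely for such $e^0$, so the convention $Q_{K_{e^0},e^0}(\ubar,\vbar):=-Q_{K,e^0}(\vbar,\ubar)$ is consistent and no additional argument is needed. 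I would therefore present the proof as: invoke consistency of $Q^\Omega_{K,e^0}$ to recast the $i^*\Omega(u_{e_{K_{e^0}}^-})$ integrals, apply Lemma~\ref{dbc} face by face, multiply by the non-negative weights $\psi_{e^0}$, sum, and substitute into Lemma~\ref{52}.
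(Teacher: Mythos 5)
Your proof is correct and follows exactly the route the paper intends: the paper gives no separate argument beyond remarking that Lemma~\ref{53} is ``an easy consequence'' of Lemmas~\ref{52} and~\ref{dbc}, and your face-by-face application of Lemma~\ref{dbc} (after using consistency of $Q^\Omega_{K,e^0}$ to rewrite $\int_{e^0} i^*\Omega(u_{e_{K_{e^0}}^-})$), weighted by the non-negative constants $\psi_{e^0}$ and summed over $e^0 \in \del\Tau^0$, is precisely that argument.
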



\subsection{Convergence analysis}

Recall that we fixed a positive $n$-form field 
$\alpha_B$ along the boundary $\del M$ in \eqref{boundarydata}. We emphasize that such a structure is necessary for the analysis of the convergence of 
the finite volume scheme, only, and was not required in the well-posedness theory. 

We assume that there exists a constant $c>0$ such that we have for every compact subset $D$ of a spacelike hypersurface $\Hcal$
\be\label{52ii} \frac{\sup_u \del_u q_{D}(u)}{\inf_u \del_u q_{D}(u)} \leq c.
\ee

For the convergence analysis we need to fix a metric $d$ on $M$. Then let $\{\Tau^h\}_{h > 0}$ be a family of triangulations of $M$
which are admissible with respect to the foliation $\{\Hcal_t\}_{t \in [0,T]}$ satisfying the following conditions:
There is a constant $c>0$ such that
\be \label{tri1}
\aligned
& d(x,y) \leq c h \quad \text{ for all }  x,y \in K, K \in \Tau^h,\\ 
& \frac{h^n}{c} < \inf_u \del_u q_{e_K^+}(u) \leq \sup_u \del_u q_{e_K^+}(u) < c h^n \quad 
\text{ for all } K \in \Tau^h,\\
& \int_{e^0} \alpha_B < ch^n \quad \text{ for all }
 e^0 \in \del^0K, K \in \Tau^h, e^0 \subset \del^0M,
\\
& \# \{ e^0 \in \del^0 K\} < c \quad \text{ for all }  K \in \Tau^h,
\\
& \# \{ e^0 \in \del^0 K: K \in \Tau^h_j, e^0 \subset \del^0 M\} < c h^{1 -n} \quad \text{ for all }  j \in \mathbb{N},
\endaligned
\ee 
where for any finite set $L$ we denote the number of elements of $L$ by $\#L$.
We also impose that for each compact subset $D \subset M$ there is some constant $c(D)>0$ such that
\be\label{tri2}
\aligned
& \#\{ K \in \Tau^h_j : K \cap D \not= \emptyset\} \leq c(D) h^{-n} \quad \text{ for all } j \in \mathbb{N},
\\
& \#\{ j \in \mathbb{N}: \exists K \in \Tau^h_j :  K \cap D \not= \emptyset\} \leq c(D) h^{-1} .\\ 
\endaligned
\ee

In addition, for any admissible entropy field $\Omega ,$ any $\bar u \in \mathbb{R}$ and any face $e^0 \in\del  \Tau^h$ we
define $\phi^\Omega_{e^0}(\bar u): e^0 \rightarrow \mathbb{R}$ via
\[ i^*_{e^0} \Omega (\bar u) = \phi^\Omega_{e^0}(\bar u) \alpha_{e^0}.\]
We also define
$\bar \phi^\Omega_{e^0}(\bar u)$ as the mean value of $\phi^\Omega_{e^0}(\bar u)$ with respect to $\alpha_{e^0}$.

We impose the following two additional conditions on the triangulation:
\begin{enumerate}
 \item Uniformly in $\bar u$ for all $K \in \Tau^h, \, e^0 \in \del^0 K$ 
\bel{curvcond}
  \frac{  \int_{e^0} | \phi^\Omega_{e^0}(\bar u) - \bar \phi^\Omega_{e^0}(\bar u)| \alpha_{e^0}} { \int_{e^0}  \alpha_{e^0}} = o(1) \quad \text{ for } h \rightarrow 0.
\ee

\item For every $j \geq 1$, every $\psi \in \mathcal{D}(M)$, every $\bar u \in \mathbb{R}$ and every admissible entropy field $\Omega$
\be\label{trichange}
\sum_{K \in  \Tau_j^h} \Big| \int_{e_K^-}(\psi_{\del^0 K^-} - \psi) i^*\Omega (u_{e_K^-})\\
 -  \int_{e_K^+} (\psi_{\del^0 K} - \psi) i^* \Omega (u_{e_K^-})\Big| = o(h), 
 \ee
 where for $K \in \Tau_j$ we denote by $K^-$ the cell having $e_K^-$ as outflow face.

\end{enumerate}

\begin{remark}\label{rem:triconds}
 \begin{enumerate}
 \item Note that \eqref{curvcond} is not a trivial consequence of $\Omega(\bar u)$ being a smooth field of $n$-forms.
 Derivatives of $\phi^\Omega_{e^0}$ do not only depend on $\Omega$ since the regularity of $e^0$ (i.e., curvature in some compatible Riemannian metric) enters via $i^*_{e^0}$.
  \item For hyperbolic conservation laws on Riemannian manifolds, i.e. \eqref{Riemann}, assumption \eqref{curvcond} is satisfied provided the curvatures of the faces of the (spatial) cells are uniformly bounded under
  mesh refinement.
  \item A careful study of the estimate of $B^h(\psi)$ in Lemma  \ref{54} reveals that we may slightly weaken assumption \eqref{curvcond}, e.g., 
it is not a problem if it is violated for some faces $e^0$ as long as their number inside any compact $D \subset M$ is bounded by $c(D) h^{-n}$.
 This happens, e.g. in case latitude-longitude grids are considered for conservation laws on the sphere.
\item Condition \eqref{trichange} restricts temporal changes (of the geometric properties) of the triangulation.
For hyperbolic conservation laws on manifolds with Lipschitz continuously evolving Riemannian metric it is automatically satisfied, if the triangulation does not change in time.
The change of its geometric properties due to the evolving Riemannian metric is compatible with \eqref{trichange}.
 \end{enumerate}
\end{remark}

\begin{lemma}\label{54}
 Let $(U,\Omega)$ be an admissible entropy pair with $U(0)=0$ and $\psi \in \mathcal{D}(M)$ a non-negative test function. Let $\{\Tau^h\}_{h > 0}$ be a family of triangulations of $M$ satisfying \eqref{tri1} and \eqref{tri2}. Provided $u_B \in L^\infty(M)$ we have
$A^h(\psi),B^h(\psi),C^h(\psi),D^h(\psi),E^h(\psi) \rightarrow 0 \ \text{for } h \rightarrow 0,$
for the functions $A(\psi),B(\psi),$ $C(\psi),D(\psi),E(\psi)$ defined in Lemma \ref{52}.
\end{lemma}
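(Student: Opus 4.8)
The strategy is to bound each of the five terms $A^h(\psi),\dots,E^h(\psi)$ separately, exploiting the smallness of the test-function oscillations on cells and faces together with the entropy dissipation control coming from Lemma~\ref{gede}. Throughout, I would write $c$ for a generic constant (depending on $\psi$, $U$, $\Omega$, $D:=\supp\psi$, but not on $h$), and I would repeatedly use the Lipschitz continuity of $\psi$ to estimate $|\psi_{e^0}-\psi|$, $|\psi_{\del^0K}-\psi|$, and $|\psi_{\del^0 K^-}-\psi_{\del^0K}|$ on a cell $K$ by $c\,h$ (using \eqref{tri1}, first line, so that $\mathrm{diam}(K)\le ch$), and the counting bounds in \eqref{tri1}--\eqref{tri2} to control the number of cells and faces meeting $D$. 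I would also note once and for all that $u_B\in L^\infty$ forces, via \eqref{boundarydata} and the monotonicity of the $q$-functions, a uniform $L^\infty$ bound on all the discrete values $u_{e_K^\pm}$, $\tilde u_{K,e^0}$, $\bar u_{K,e^0}$; consequently all the integrands $i^*\Omega(\cdot)$, $\del_uU(\cdot)\,i^*\omega(\cdot)$ are uniformly bounded $n$-forms on $D$.

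\textbf{Term-by-term.} For $B^h(\psi)=\sum_{K,e^0}\int_{e^0}(\psi_{e^0}-\psi)\,i^*\Omega(u_{e_K^-})$ I would split $\psi_{e^0}-\psi = (\psi_{e^0}-\bar\psi_{e^0})+(\bar\psi_{e^0}-\psi)$ is not needed; instead decompose $i^*\Omega(u_{e_K^-}) = \bar\phi^\Omega_{e^0}(u_{e_K^-})\alpha_{e^0} + (\phi^\Omega_{e^0}-\bar\phi^\Omega_{e^0})\alpha_{e^0}$. Since $\psi_{e^0}$ is exactly the $\alpha_{e^0}$-mean of $\psi$, the first piece pairs against $(\psi_{e^0}-\psi)$ to give $0$; the remaining piece is controlled by $\|\psi\|_\infty\cdot o(1)\cdot\int_{e^0}\alpha_{e^0}$ via \eqref{curvcond}, and summing over the $\le c(D)h^{-n}$ faces per slice and $\le c(D)h^{-1}$ slices, each with $\int_{e^0}\alpha_{e^0}\le ch^n$ (here I use that the $n$-forms $\alpha_{e^0}$ are comparable to surface measure on $e^0\subset\del^0 M$, via \eqref{tri1} third line, plus a similar bound for interior faces coming from the chart construction), gives $B^h(\psi)=o(1)$. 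For $A^h(\psi)$, $C^h(\psi)$, $E^h(\psi)$ the test-function factor is $O(h)$ (a difference of averages of a Lipschitz function over sets of diameter $\le ch$), so it suffices to show the accompanying entropy-flux differences, summed, are $O(1)$ --- indeed better: using Lemma~\ref{convexities}, $i^*\Omega(\tilde u)-i^*\Omega(u_{e_K^+})$ is controlled in $L^1(e_K^+)$ by $|q_{e_K^+}(\tilde u)-q_{e_K^+}(u_{e_K^+})|$ up to the ratio \eqref{52ii}, and then by the global entropy dissipation estimate of Lemma~\ref{gede} applied to $U(u)=u^2$, $\sum_{j}\sum_{e^0\in\del\Tau_j}\lambda_{K,e^0}\frac{(\inf\del_u q)^2}{\sup\del_u q}|\tilde u_{K,e^0}-u_{e_K^+}|^2$ is bounded by boundary/initial flux terms which are $O(1)$ uniformly in $h$ (using $u_B\in L^\infty$ and the counting bounds). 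A Cauchy--Schwarz step then trades the $h$ from the test function against the $h^{-1}$ from the number of slices and an $h^n$ versus $h^{-n}$ balance in the weights, leaving a net $o(1)$. For $D^h(\psi)$ one argues identically, replacing $i^*\Omega$ by $\del_uU(u_{e_K^+})\,i^*\omega$, which is again Lipschitz-controlled by the same $q$-differences since $\del_uU$ is bounded on the relevant range. Finally $E^h(\psi)$ additionally needs \eqref{trichange}: there the factor $\psi_{\del^0K}-\psi$ must be reorganized by summing by parts across consecutive slices, turning $\sum_K\int_{e_K^+}(\psi_{\del^0K}-\psi)(i^*\Omega(u_{e_K^+})-i^*\Omega(u_{e_K^-}))$ into a telescoping-type expression to which \eqref{trichange} applies directly, yielding $o(h)\cdot h^{-1}=o(1)$ after counting slices.

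\textbf{Main obstacle.} The routine parts are the Lipschitz estimates and the counting; the delicate point is making the Cauchy--Schwarz bookkeeping in $A^h,C^h,D^h$ genuinely close. One has a sum over faces/cells of (geometric weight)$\times$(test-function oscillation)$\times$(flux defect), where the flux defects are only controlled in an $\ell^2$ sense (square-summable with weights) by Lemma~\ref{gede}, not in $\ell^1$. So I must pair the $\ell^2$ bound on the defects against an $\ell^2$ bound on (weight $\times$ oscillation), and the latter is where the precise powers of $h$ in \eqref{tri1} --- namely $\inf\del_u q_{e_K^+}\sim h^n$ together with $\#\{K\in\Tau_j^h: K\cap D\ne\emptyset\}\le c(D)h^{-n}$ and $\#\{j\}\le c(D)h^{-1}$ --- have to conspire to beat the $O(h)$ from $\psi$ only by an $o(1)$ margin, with no room to spare. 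Getting the weights $\lambda_{K,e^0}\frac{(\inf\del_u q)^2}{\sup\del_u q}$ from Lemma~\ref{gede} to match the weights implicitly appearing when one rewrites $\int_{e_K^+}|i^*\Omega(\tilde u)-i^*\Omega(u_{e_K^+})|$ in terms of $|q(\tilde u)-q(u_{e_K^+})|^2$ via Jensen and \eqref{52ii} is the crux, and is exactly where assumption \eqref{52ii} (bounded ratio of sup to inf of $\del_u q$ on compacta) is indispensable. I would isolate that computation as the heart of the proof and present the other four estimates as variations on it.
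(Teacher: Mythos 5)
Your handling of $A^h$, $B^h$, $C^h$, and $E^h$ follows essentially the paper's own route (Cauchy--Schwarz pairing of the $O(h)$ test-function oscillation against the weighted $\ell^2$ dissipation bound of Lemma \ref{gede} with \eqref{52ii}, the mean-zero pairing with $\bar\phi^\Omega_{e^0}$ plus \eqref{curvcond} for $B^h$, and slice-wise telescoping plus \eqref{trichange} for $E^h$). However, there is a genuine gap in your treatment of $D^h$. In Lemma \ref{52} the term is
$D(\psi)=-\sum_{K,e^0}\int_{e_K^+}\lambda_{K,e^0}\,\psi\,\del_u U(u_{e_K^+})\bigl(i^*\omega(\tu)-i^*\omega(u_{e_K^+})\bigr)$,
so the test function enters as $\psi$ itself, \emph{not} as an oscillation of size $O(h)$; arguing ``identically'' to $A^h$ therefore does not close. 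With only $\|\psi\|_\infty$ available, the Cauchy--Schwarz step against the dissipation estimate gives a bound of order $\bigl(\sum_{K,e^0}\lambda_{K,e^0}\sup_u\del_u q_{e_K^+}\bigr)^{1/2}\cdot O(1)\sim (h^{n}\cdot h^{-(n+1)})^{1/2}=h^{-1/2}$, which diverges. The missing idea is the cancellation the paper exploits: by the convex decomposition \eqref{convdec}, for each cell $K$ one has $\sum_{e^0\in\del^0K}\lambda_{K,e^0}\int_{e_K^+}\bigl(i^*\omega(\tu)-i^*\omega(u_{e_K^+})\bigr)=0$, so any per-cell constant $\psi_{e_K^+}\,\del_uU(u_{e_K^+})$ can be subtracted without changing $D(\psi)$. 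Replacing $\psi$ by $\psi-\psi_{e_K^+}$ produces the needed $O(h)$ factor, after which your $A^h$-type argument applies verbatim. Without invoking \eqref{convdec} (which you never cite), the $D^h$ estimate fails.

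A secondary, easily repaired slip: in $B^h$ you bound the factor $\psi_{e^0}-\psi$ by $\|\psi\|_\infty$, and with $\#\{e^0\}\lesssim h^{-(n+1)}$ faces each of $\alpha_{e^0}$-measure $\lesssim h^n$ this yields only $o(1)\cdot h^{-1}$, which is not small. You must retain the Lipschitz estimate $|\psi_{e^0}-\psi|\le Ch$ (which you announced in your preamble but dropped here), so that the product scales as $h\cdot o(1)\cdot h^{n}\cdot h^{-(n+1)}=o(1)$; this is exactly how the paper's proof closes that term. Apart from these two points, your outline --- including the identification of \eqref{52ii} and the powers of $h$ in \eqref{tri1}--\eqref{tri2} as the crux of the bookkeeping, and the boundedness of the dissipation sum via the initial/boundary fluxes (the paper additionally uses Lemma \ref{dbc} and the CFL condition \eqref{CFL} there) --- matches the published argument.
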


\begin{proof} 
We start by showing $A^h(\psi) \rightarrow 0$ for $h \rightarrow 0$.
Note that
\be\label{531} 
|A^h(\psi)|
  \leq \sum_{K \in \Tau^h, e^0 \in \del^0K} \len |\psi_{\del^0K}-\psi_{e^0}| \sup_u | \del_u q_{e_K^+}^\Omega| | \tu - u_{e_K^+}|.
\ee
We have due to \eqref{52ii}
$$
|\del_u q_{e_K^+}^\Omega (u)| = \left| \int_{e_K^+}\del_u U(u) i^* \del_u \omega \right| \leq c \| \del_u U\|_\infty \inf_u \del_u q_{e_K^+}(u),
$$ 
and hence
\begin{multline}
|A^h(\psi)| \leq 
\left(C\sum_{K \in \Tau^h, e^0 \in \del^0K} \len \inf_u \del_u q_{e_K^+} |\psi_{\del^0K} - \psi_{e^0}|^2 \right)^{1/2}
\\
\, \left(C\sum_{K \in \Tau^h, e^0 \in \del^0K} \len \inf_u \del_u q_{e_K^+} | \tu - u_{e_K^+}|^2 \right)^{1/2} =: (I^h_1)^{1/2}  (I^h_2 )^{1/2}.
\end{multline}
Let $D:= \operatorname{supp} \psi$ then \eqref{tri1} and \eqref{tri2} imply
$$
I^h_1 \leq C c(D)^2 h^{-1} \sup_{K \in \Tau^h, e^0 \in \del^0K} |\psi_{\del^0K} - \psi_{e^0}|^2 .
$$
Now \eqref{tri1} and the fact that $\psi \in  \mathcal{D}(M)$ imply
$$
|\psi_{\del^0K} - \psi_{e^0}| \leq C h
$$ and hence
$$
I_1^h \rightarrow 0 \text{ for } h \rightarrow 0.
$$

In order to derive an estimate for $I^h_2$ we define
$$
L^h:= \{ K \in \Tau^h: K \cap D \not= \emptyset \} \text{ and } \del^0 L^h := \{ e^0 \in \del^0K : K \in L^h, e^0 \subset \del^0 M\},
$$
and note that  it is sufficient to find an estimate for
$$
\tilde I^h_2 :=  \sum_{K\in L^h, e^0 \in \del^0K} \len \inf_u \del_u q_{e_K^+} | \tu - u_{e_K^+}|^2
$$
instead of $I^h_2$. 
Using the first assertion of Lemma \ref{gede} we have
$$
\aligned
\tilde I_2^h &\leq C  \sum_{L^h} \left(q_{e_K^-}^\Omega(u_{e_K^-}) - q_{e_K^+}^\Omega(u_{e_K^+}) \right)
                                             - \sum_{e^0 \in \del^0 L^h} Q^\Omega_{K,e^0} (u_{e_K^-}, u_{e_{K_{e^0}}^-})  \\
& =: C( I^h_3 + I^h_4).
\endaligned
$$
In $I_3^h$ all terms which are inflow as well  as outflow faces cancel out and hence
$$
\aligned
I^h_3 = \sum_{K \in \Tau_0^h \cap L^h} q^\Omega_{e_K^-}(u_{e_K^-}) 
&\leq  \sum_{K \in \Tau_0^h \cap L^h} \int_{e_K^-} \int_0^{u_{e_K^-}} \del_v U(v) i^* \del_v \omega(v) \,dv
\\ 
  & \leq \|\del_u U \|_\infty  \sum_{K \in \Tau_0^h \cap L^h} \int_{e_K^-}|u_{e_K^-}| \alpha
\\ 
  &  \leq \|\del_u U \|_\infty \|u_0\|_\infty \sum_{K \in \Tau_0^h \cap L^h} \int_{e_K^-}  \alpha
 \leq C \|u_0\|_\infty\int_{\Hcal_0 \cap D} \alpha.
\endaligned
$$
Furthermore we get using \eqref{tri1} and \eqref{tri2}
$$
\aligned
I^h_4 &= - \sum_{e^0 \in \del^0 L^h} Q_{K,e^0}^\Omega (u_{e_K^-}, u_{e_{K_{e^0}}^-})\\
      &\stackrel{\text{Lemma } \ref{dbc}}\leq  \sum_{e^0 \in \del^0 L^h} \Big(\left| Q^\Omega_{K,e^0}(u_{e_{K_{e^0}}^-},u_{e_{K_{e^0}}^-}) \right| \\
  &\qquad \qquad+
\Big|\del_uU(u_{e_{K_{e^0}}^-}) \big[ Q_{K,e^0} (u_{e_K^-},u_{e_{K_{e^0}}^-}) - Q_{K,e^0}(u_{e_{K_{e^0}}^-},u_{e_{K_{e^0}}^-})\big]\Big| \Big)
\\
& \stackrel{\eqref{CFL}} \leq   \sum_{e^0 \in \del^0 L^h} \Bigg(
 \left| \int_{e^0} i^* \Omega(u_{e_{K_{e^0}}^-}) \right|
+ \|\del_u U\|_\infty \inf_u \del_u q_{e_K^+}(u)\left| u_{e_K^-} - u_{e_{K_{e^0}}^-}\right| \Bigg)
\\
& \leq \sum_{e^0 \in \del^0 L^h} \left(
\int_{e^0} \int_0^{u_{e_{K_{e^0}}^-}} \del_v U(v) i^*\del_v \omega(v) \, dv  \right)
  + C c(D)^2 h^{-1} h^{1-n} h^n \Big(\sup_{K \in \Tau^h} | u_{e_K^-}|  + \|u_B\|_\infty\Big)\\
&\leq C \|u_B\|_\infty \| \del_uU\|_\infty \int_{\del^0M \cap D} \alpha + C.
\endaligned
$$
Note that $\sup_{K \in \Tau^h} | u_{e_K^-}| \leq  \|u_B\|_\infty$ because of the CFL condition \eqref{CFL} and since $\omega$
is geometry compatible.
So finally $I^h_2$ is bounded and hence $A^h(\psi)$ goes to zero.

Let us now turn our attention to $B^h(\psi) $
\[
\begin{split}|B^h(\psi)|&= \Big|\sum_{K \in \Tau^h, e^0 \in \del^0K} \int_{e^0} (\psi_{e^0}-\psi) \phi^\Omega_{e^0} (u_{e_K^-}) \alpha_{e^0} \Big|\\
& = \Big|\sum_{K \in \Tau^h, e^0 \in \del^0K} \int_{e^0} (\psi_{e^0}-\psi) \big(\phi^\Omega_{e^0} (u_{e_K^-}) -\bar \phi^\Omega_{e^0} (u_{e_K^-})\big)  \alpha_{e^0} \Big|\\
& \leq \sum_{K \in L^h, e^0 \in \del^0K} \operatorname{diam}(e^0) o(1)  \int_{e^0} \alpha_{e^0} 
= o(1), 
\end{split}
\]
where we used the regularity of $\psi$, \eqref{curvcond}, and \eqref{tri1}$_2$ and \eqref{tri2} which imply  $\# \{ e^0 \in \del^0K \, : \, K \in L^h\} \leq C h^{-(n+1)}$.

The term $D^h(\psi)$ can be estimated as follows:
\[
 D^h(\psi) = - \sum_{K \in \Tau^h,e^0 \in \del^0K} \int_{e_K^+} \lambda_{K,e^0} ( \psi - \psi_{e_K^+}) \del_u U(u_{e_K^+}) ( i^* \omega (\tu) - i^*\omega(u_{e_K^+})), 
\]
where $ \psi_{e_K^+}$ is any average of $\psi$ on $e_K^+,$ because of \eqref{convdec}.
Using admissibility of $U$ and regularity of $\psi$ we obtain
\[ |D^h(\psi)| \leq C \sum_{K \in \Tau^h,e^0 \in \del^0K} \operatorname{diam}(e^+_K) \sup_u (\del_u q_{e_K^+}) |\tu - u_{e_K^+}|.\]
This bound for $| D^h(\psi)|$ can be estimated exactly as the bound for $|A^h|$ obtained in \eqref{531}.
The term $C^h(\psi)$ can be estimated analogously to $A^h(\psi)$ and $D^h(\psi)$.

Finally we consider $E^h(\psi)$ and note that by regrouping terms  we obtain
$$
\aligned
 E^h(\psi) 
&= \sum_{j=1}^\infty \sum_{e^0 \in \del \Tau_j^h} \Big( \int_{e_K^-} (\psi_{\del^0 K^-} - \psi) i^*\Omega (u_{e_K^-})
 -  \int_{e_K^+} (\psi_{\del^0 K} - \psi) i^*\Omega (u_{e_K^-})\Big)
\\
& \qquad - \sum_{e^0 \in \del \Tau_0^h} \int_{e_K^+}  (\psi_{\del^0 K} - \psi) i^*\Omega (u_{e_K^-})\\
&
 =: E_1^h(\psi) + E_2^h(\psi),
\endaligned
$$
where for $K \in \Tau_j^h$ we denote by $K^-$ the cell having $e_K^-$ as outflow face.
Due to the regularity of $\psi$ we have $E^h_2(\psi) \rightarrow 0$ for $h \rightarrow 0$.
Moreover, $E^h_1(\psi) \rightarrow 0$ for $h \rightarrow 0$ due to assumption \eqref{trichange}.

\end{proof}

For every triangulation $\Tau^h$ the finite volume method  \eqref{fvgm} generates an approximate solution of \eqref{LR.1},\eqref{LR.2i} defined by
\be
\label{apsol}
u^h(x) := u_{e_K^-} \text{ for } x \in K.
\ee

\begin{theorem}[Convergence of the finite volume schemes on a spacetime]\label{conv}
Let $\omega$ be a geometry compatible flux field with at most linear growth and satisfying the global hyperbolicity condition on a spacetime $M$ 
and  let $\{u^h\}_{h>0}$ be the sequence of approximate solutions generated by the finite volume method associated to a family of triangulations $\Tau^h$ satisfying \eqref{tri1}, \eqref{tri2}, \eqref{curvcond} and \eqref{trichange}.
Then $u^h$ converges to an entropy solution of the initial boundary value problem \eqref{LR.1},\eqref{LR.2i}
for $h \rightarrow 0$.

\end{theorem}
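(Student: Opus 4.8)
The plan is to pass to the limit $h \to 0$ in the discrete entropy inequality of Lemma~\ref{53}, to identify the limit as an entropy measure-valued solution in the sense of Definition~\ref{LR.4}, and then to invoke Theorem~\ref{MTHM.2} in order to collapse the resulting Young measure to a Dirac mass carried by the unique entropy solution, which yields strong convergence. The first step is to record a discrete maximum principle for the scheme: writing \eqref{fvgm} in incremental form and using the consistency, conservation and monotonicity of $Q_{K,e^0}$ together with the CFL condition \eqref{CFL}, and the fact that geometry compatibility \eqref{LR.3} makes constants exact solutions of \eqref{fvgm}, one gets the uniform bound $\| u^h \|_{L^\infty(M)} \leq \| u_B \|_{L^\infty(M)}$ (already invoked at the end of the proof of Lemma~\ref{54}). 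Hence $\{ u^h \}_{h>0}$ is bounded in $L^\infty(M)$ and, along a subsequence, generates a locally integrable Young measure $\nu : M \to \text{Prob}(\RR)$; local integrability is immediate from the $L^\infty$ bound, and $g(u^h)\, \alpha \rightharpoonup \la \nu, g \ra\, \alpha$ weakly as measures for every continuous $g$ and every $\alpha \in \DL^{n+1}(\Mring)$.

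Next, fixing an admissible convex entropy pair $(U,\Omega)$ with $U(0)=0$ and a nonnegative $\psi \in \Dcal(M)$ supported in $M \setminus \Hcal_T$, I would pass to the limit in \eqref{53:eq}. By Lemma~\ref{54} the whole right-hand side $A^h(\psi) + B^h(\psi) + C^h(\psi) + D^h(\psi) + E^h(\psi) \to 0$. On the left-hand side, the bulk term $\sum_{K} \int_K d(\psi \Omega)(u_{e_K^-})$ converges to $\int_M \la \nu,\, d\psi \wedge \Omega(\cdot) + \psi\, (d\Omega)(\cdot) \ra$ by the Young-measure convergence and the at-most-linear growth of admissible entropy fluxes; here $(d\Omega)(\cdot) = 0$ by \eqref{dOmega}, but it costs nothing to keep the general form. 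The initial term $\sum_{K \in \Tau_0} \int_{e_K^-} \psi\, i^*\Omega(u_{e_K^-})$ converges to $\int_{\Hcal_0} \psi\, i^*\Omega(u_B)$, because on the inflow slice $\Hcal_0$ the cell traces $u_{e_K^-}$ are the $\alpha_B$-averages of $u_B$ from \eqref{boundarydata}, $\psi$ is continuous, and the mesh conditions \eqref{tri1} permit a Lebesgue-point argument.

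For the vertical-boundary contributions I would, on each boundary face $e^0 \subset \del^0 M$, convert the numerical flux value $Q_{K,e^0}(u_{e_K^-}, u_{e_{K_{e^0}}^-})$ into an $n$-form and assemble these into a boundary field $\gamma^h$; the Lipschitz continuity of $Q_{K,e^0}$ and the $L^\infty$ bound from the first step give a uniform $\Lloc \Lambda^n(\del^0 M)$ bound, so $\gamma^h \rightharpoonup \gamma$ for some $\gamma \in \Lloc \Lambda^n(\del^0 M)$ along a further subsequence. Using consistency $Q_{K,e^0}(\ubar,\ubar) = q_{e^0}(\ubar)$, condition \eqref{curvcond} (to replace $\psi_{e^0}$ and the $n$-forms $i^*\Omega(\ubar)$, $i^*\omega(\ubar)$ by their $\alpha_{e^0}$-averages at negligible cost), the mesh counts \eqref{tri1}--\eqref{tri2}, and the convergence $u_{e_{K_{e^0}}^-} \to u_B$ in $L^1(\del^0 M, \alpha_B)$ (again a Lebesgue-point argument based on \eqref{boundarydata}), the two vertical-boundary groups in \eqref{53:eq} reassemble in the limit, up to the orientation conventions on $\del M$, into the boundary contribution $\int_{\del^0 M} \psi \big( i^*\Omega(u_B) + \del_u U(u_B)(\gamma - i^*\omega(u_B)) \big)$ of Definition~\ref{LR.4}. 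Collecting the three limits shows that $\nu$ satisfies the inequality of Definition~\ref{LR.4} for every admissible convex entropy pair and every nonnegative $\psi \in \Dcal(M)$ supported away from $\Hcal_T$; exhausting $M$ by the sub-slabs $\bigcup_{t \leq T'} \Hcal_t$ with $T' \uparrow T$ and a density argument then remove the support restriction, so $\nu$ is an entropy measure-valued solution of \eqref{LR.1}, \eqref{LR.2i}. By Theorem~\ref{MTHM.2}, $\nu = \delta_u$ with $u \in \Lloc(M)$ the unique entropy solution of the initial-boundary value problem; since the limiting Young measure is a Dirac mass, $u^h \to u$ strongly in $\Lloc(M)$ (and in $L^p_{\text{loc}}$ for every finite $p$, by the $L^\infty$ bound), and since the limit is subsequence-independent the full family converges.

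The step I expect to be the main obstacle is the treatment of the vertical boundary: extracting and identifying the limiting boundary flux $\gamma$ and checking that the discrete boundary terms coming from Lemma~\ref{dbc} reassemble, in the limit, into precisely the combination $i^*\Omega(u_B) + \del_u U(u_B)(\gamma - i^*\omega(u_B))$ required by Definition~\ref{LR.4}. This is exactly where the averaging prescription \eqref{boundarydata}, the curvature-type hypothesis \eqref{curvcond}, and the face-counting bounds in \eqref{tri1}--\eqref{tri2} (which control how many boundary faces a fixed compact set can meet) are indispensable. By comparison the interior passage to the limit is routine once Lemmas~\ref{gede} and \ref{54} are available.
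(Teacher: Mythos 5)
Your proposal follows essentially the same route as the paper: the paper proves Theorem \ref{conv} by combining the $L^\infty$ bound from the CFL condition (yielding a Young measure), Lemma \ref{lemma:COP-1} — which passes to the limit in the inequality of Lemma \ref{53} using Lemma \ref{54} for the error terms and defines $\gamma$ exactly as you do, as the weak-$*$ limit of the numerical boundary fluxes together with the strong convergence $u_{e_{K_{e^0}}^-} \to u_B$ — and then Theorem \ref{MTHM.2} to collapse $\nu$ to a Dirac mass at the unique entropy solution. Your added details (discrete maximum principle, Lebesgue-point arguments, subsequence independence) are consistent elaborations of the same argument.
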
 

The proof of Theorem \ref{conv} follows from Theorem \ref{MTHM.2} and  Lemma \ref{lemma:COP-1} below. 
The proof of Theorem~\ref{MTHM.2}  is omitted since it  follows along the same lines as in the Riemannian setting treated in \cite{BL07}, once charts are chosen which are compatible with the foliation.

As the CFL condition \eqref{CFL} implies $L^\infty$ stability of the finite volume scheme there 
exists a Young measure $\nu: M \rightarrow \text{Prob}(\RR),$ which allows us to determine all weak-$*$ limits of composite functions $a(u^h)$
for all continuous functions $a$, as $h \to 0$, 
\be
\label{COP.1}
a(u^h) \mathrel{\mathop{\rightharpoonup}\limits^{*}} \langle \nu, a \rangle
=
 \int_\RR a(\lambda) \, d\nu(\lambda). 
\ee

\begin{lemma}[Entropy inequalities for the Young measure]
\label{lemma:COP-1}
Let $\nu$ be a Young measure associated with the finite volume approximations $u^h$.
Then, for every convex entropy flux field $\Omega$ and every non-negative test-function $\psi \in \Dcal(M)$ 
supported compactly in $M \setminus \Hcal_T,$ 
 there exists a boundary field $\gamma \in \Lloc\Lambda^n(\del M)$
such that 
\be
\label{COP.2}
\aligned
& \int_M  \Big\la \nu,  d \psi \wedge \Omega(\cdot)  +  \psi \, \big((d  \Omega) (\cdot) - \del_u U(\cdot) (d \omega) (\cdot)\big) \Big\ra
\\
& - \int_{\del M} \psi_{|\del M} \, 
\Big( i^*\Omega(u_B) + \del_u U(u_B)  \big(\gamma - i^*\omega(u_B)\big)\Big) 
\,  \geq 0.
\endaligned
\ee
\end{lemma}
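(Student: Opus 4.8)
The plan is to pass to the limit $h \to 0$ in the discrete global entropy inequality of Lemma~\ref{53}, discarding the error terms by Lemma~\ref{54}. Fix an admissible convex entropy pair $(U,\Omega)$ (this is all that is needed to verify Definition~\ref{LR.4}; the general convex case then follows by truncating $U$) and a non-negative $\psi \in \Dcal(M)$ compactly supported in $M \setminus \Hcal_T$. Since $\omega$, hence $\Omega$ by Definition~\ref{key63}, is geometry-compatible, one has $(d\omega)(\ubar) \equiv 0$ and $(d\Omega)(\ubar) \equiv 0$, so that $d(\psi\,\Omega)(u_{e_K^-}) = d\psi \wedge \Omega(u_{e_K^-})$ and the terms $\psi\,(d\Omega)(\cdot)$, $\psi\,\del_u U(\cdot)(d\omega)(\cdot)$ of \eqref{COP.2} vanish identically (they are retained only to match Definition~\ref{LR.4}). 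By Lemma~\ref{54} the right-hand side $A^h(\psi)+B^h(\psi)+C^h(\psi)+D^h(\psi)+E^h(\psi) \to 0$, so it remains to identify the limits of the four terms on the left-hand side of \eqref{53:eq}, rewritten through $u^h$: recall $u^h = u_{e_K^-}$ on $K$, and by \eqref{boundarydata} $u_{e_K^-} = \big(\int_{e_K^-}\alpha_B\big)^{-1}\int_{e_K^-}u_B\,\alpha_B$ when $e_K^- \subset \del M$ and $u_{e_{K_{e^0}}^-} = \big(\int_{e^0}\alpha_B\big)^{-1}\int_{e^0}u_B\,\alpha_B$ when $e^0 \subset \del^0 M$. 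Passing if necessary to a subsequence of the one defining $\nu$, I would also assume that these boundary averages converge to $u_B$ pointwise $\alpha_B$-almost everywhere.

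The interior term equals $-\sum_{K}\int_K d(\psi\,\Omega)(u_{e_K^-}) = -\int_M d\psi \wedge \Omega(u^h)$. Working in the fixed finite atlas and writing $\Omega(\ubar) = \sum_{|I|=n}\Omega_I(x;\ubar)\,dx^I$ with $\Omega_I$ smooth, a standard uniform approximation of each continuous $\Omega_I$ on compacta by finite sums $\sum_j g_j(x)\,a_j(\ubar)$, combined with \eqref{COP.1}, gives $\Omega_I(\,\cdot\,;u^h(\,\cdot\,)) \rightharpoonup^{*} \langle \nu, \Omega_I(\,\cdot\,;\,\cdot\,)\rangle$ in $L^\infty_{\mathrm{loc}}$, whence $-\int_M d\psi \wedge \Omega(u^h) \to -\int_M \langle \nu,\, d\psi \wedge \Omega(\cdot)\rangle$. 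The $\Hcal_0$-term $-\sum_{K \in \Tau_0}\int_{e_K^-}\psi\, i^*\Omega(u_{e_K^-})$ equals $-\int_{\Hcal_0}\psi\, i^*_{\Hcal_0}\Omega(u^h|_{\Hcal_0})$, where $u^h|_{\Hcal_0}$ is the piecewise-constant field of $\alpha_B$-averages of $u_B$; since it converges to $u_B$ $\alpha_B$-a.e.\ and boundedly and $\Omega$ depends continuously and boundedly on its parameter, dominated convergence yields $i^*_{\Hcal_0}\Omega(u^h|_{\Hcal_0}) \to i^*_{\Hcal_0}\Omega(u_B)$ in $L^1\Lambda^n(\Hcal_0)$, so this term tends to $-\int_{\Hcal_0}\psi\, i^*\Omega(u_B)$ --- the orientation of the spacelike slice $\Hcal_0$ fixed in Definition~\ref{hyperb-def} being opposite to the one induced on $\del M$ from $M$, which accounts for the sign against \eqref{COP.2}. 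Likewise, using $\psi_{e^0} \to \psi$ uniformly and $u_{e_{K_{e^0}}^-} \to u_B$ $\alpha_B$-a.e., the term $\sum_{e^0 \in \del\Tau^0}\int_{e^0}\psi_{e^0}\, i^*\Omega(u_{e_{K_{e^0}}^-})$ tends to $\int_{\del^0 M}\psi\, i^*\Omega(u_B)$.

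The remaining boundary-flux term is where $\gamma$ is produced. By consistency, $Q_{K,e^0}(u_{e^0},u_{e^0}) = q_{e^0}(u_{e^0}) = \int_{e^0}i^*\omega(u_{e^0})$, so each summand of the fourth term of \eqref{53:eq} splits as
$$
\psi_{e^0}\,\del_u U(u_{e^0})\,Q_{K,e^0}(u_{e_K^-},u_{e^0}) \;-\; \psi_{e^0}\,\del_u U(u_{e^0})\int_{e^0} i^*\omega(u_{e^0}).
$$
Summed over $\del\Tau^0$, the second piece converges (by the argument just used, with $i^*\omega$ in place of $i^*\Omega$) to $-\int_{\del^0 M}\psi\,\del_u U(u_B)\, i^*\omega(u_B)$. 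For the first piece, I would introduce the piecewise-constant boundary $n$-form $\gamma^h$ on $\del^0 M$ defined by $\gamma^h|_{e^0} := \big(\int_{e^0}\alpha_B\big)^{-1}\,Q_{K,e^0}(u_{e_K^-},u_{e^0})\,\alpha_B$, so that $\int_{e^0}\gamma^h = Q_{K,e^0}(u_{e_K^-},u_{e^0})$ and the first piece equals $\sum_{e^0}\int_{e^0}\psi_{e^0}\,\del_u U(u_{e^0})\,\gamma^h$. The consistency, monotonicity and CFL bounds on $Q_{K,e^0}$, the $L^\infty$ bound $\sup_K|u_{e_K^-}| \le \|u_B\|_\infty$ (a consequence of \eqref{CFL} and geometry-compatibility, already noted in the proof of Lemma~\ref{54}), and the mesh control \eqref{tri1} give $\big|\int_{e^0}\gamma^h\big| \le C\int_{e^0}\alpha_B$ with $C$ independent of $h$, so $\{\gamma^h\}_{h>0}$ is bounded in $L^\infty\Lambda^n(\del^0 M)$; extracting a further subsequence, $\gamma^h \rightharpoonup^{*} \gamma$ for some $\gamma \in \Lloc\Lambda^n(\del^0 M)$. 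Since the piecewise-constant functions $\psi_{e^0}\,\del_u U(u_{e^0})$ converge to $\psi\,\del_u U(u_B)$ in $L^1(\del^0 M;\alpha_B)$ (uniform convergence of $\psi_{e^0}$, and dominated convergence for $\del_u U(u_{e^0})$ using $\del_u U$ bounded and $u_{e^0}\to u_B$ a.e.), the first piece converges to $\int_{\del^0 M}\psi\,\del_u U(u_B)\,\gamma$. I then extend $\gamma$ to $\del M$ by setting $\gamma := i^*\omega(u_B)$ on $\Hcal_0 \cup \Hcal_T$, which makes the correction $\del_u U(u_B)(\gamma - i^*\omega(u_B))$ vanish there, matching the absence of a flux-correction term on $\Hcal_0$ in \eqref{53:eq} and the vanishing of $\psi$ on $\Hcal_T$. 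Collecting the four limits, the inequality of Lemma~\ref{53} passes in the limit into the statement that $-\big(\int_M \langle\nu, d\psi\wedge\Omega(\cdot)\rangle - \int_{\del M}\psi_{|\del M}\,(i^*\Omega(u_B)+\del_u U(u_B)(\gamma - i^*\omega(u_B)))\big) \le 0$, i.e.\ into \eqref{COP.2}; since $\nu$ and $\gamma$ were built along one and the same subsequence, independently of $(U,\Omega)$ and $\psi$, the inequality holds simultaneously for all admissible convex entropy pairs and all admissible test-functions.

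The step I expect to be the main obstacle is the correct construction and assembly of the boundary contribution: extracting $\gamma$ as a weak-$*$ limit of the numerical boundary fluxes relies on the uniform $L^\infty$ and flux bounds together with the geometric mesh control \eqref{tri1}, and one must track carefully the orientation conventions on $\Hcal_0$ and $\del^0 M$, as well as the asymmetry between the initial slice $\Hcal_0$ (where the cell inflow value already equals the prescribed boundary average, so no flux-correction term appears in \eqref{53:eq}) and the vertical boundary $\del^0 M$ (where the numerical flux $Q_{K,e^0}$ produces the correction term), so that all signs in \eqref{COP.2} come out right. By comparison, the interior term and the disposal of the error terms through Lemma~\ref{54} and the Young-measure convergence \eqref{COP.1} are routine.
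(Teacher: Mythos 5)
Your proposal is correct and follows essentially the same route as the paper's proof: passing to the limit in the global entropy inequality of Lemma \ref{53}, discarding the error terms via Lemma \ref{54}, using the Young-measure convergence \eqref{COP.1} for the interior term, and defining $\gamma$ as the weak-$*$ limit of the numerical boundary fluxes $Q_{K,e^0}(u_{e_K^-},u_{e_{K_{e^0}}^-})$ on $\del^0 M$, extended by $i^*\omega(u_B)$ on the remainder of $\del M$, together with the strong convergence of the boundary averages to $u_B$. You in fact supply more detail than the paper (the interpretation of the scalar fluxes as $n$-forms via $\alpha_B$, the uniform bound justifying the weak-$*$ extraction, and the orientation bookkeeping on $\Hcal_0$), all of which is consistent with the paper's argument.
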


\begin{proof}
We start with the inequality from Lemma \ref{53}. Recalling
$(d(\psi \Omega))(u) = d\psi \wedge \Omega (u) + \psi (d\Omega)(u)$
 and $d\omega=0$ we see that the first summand of \eqref{53:eq} converges to 
\be 
- \int_M  \Big\la \nu,  d \psi \wedge \Omega(\cdot)  +  \psi \, \big((d  \Omega) (\cdot) - \del_u U(\cdot) (d \omega) (\cdot)\big)\Big\ra
\ee
for $h \rightarrow  0$.

We define $\gamma$ as the weak-$*$ limit of the sequence $\left\{ Q_{K,e^0}(u_{e_K^-},u_{e_{K_{e^0}}^-})\right\}$ on $\del^0 M$ and by
$\gamma= i^*\omega(u_B)$ on the rest of $\del M$. We know that $u_{e_{K_{e^0}}^-} \rightarrow u_B$ strongly and thus 
using the definition of $\gamma$ we see that the other summands on the left hand side of the inequality converge to
\be
\int_{\del M} \psi_{|\del M} \, 
\Big( i^*\Omega(u_B) + \del_u U(u_B)  \big(\gamma - i^*\omega(u_B)\big)\Big) 
.
\ee
Finally by Lemma \ref{54} the right hand side of the inequality converges to zero, which finishes the proof.
\end{proof}

\begin{proof}[Proof of Theorem \ref{main}]
The existence of entropy solutions follows from the convergence of the finite volume method, once we have shown that a family of triangulations on $M$ exists 
that satisfies \eqref{curvcond} and \eqref{trichange} for $h \rightarrow 0$.
First, we fix a Riemannian metric on $N$. 
This can be achieved by fixing an atlas $\{ U_i, \varphi_i\}_{i \in I}$ of $N,$ with a finite index set $I,$
and a partition of unity which is subordinate to the open cover $\{ U_i\}_{i \in I}$.
The pull-back of the Euclidean Riemannian metric under each chart $\varphi_i$ induces a Riemannian metric on $U_i$.
Using the partition of unity these Riemannian metrics can be combined to a Riemannian metric on $N$.
This immediately induces a Riemannian metric on $M=N\times [0,T]$. 

Let us now fix a triangulation on $N$ using so-called Karcher simplexes, \cite{vonDeylen}, denoted $\{ T_j\}_{j \in J}$ for a 
finite index set $J$.
For each of these simplexes the so-called barycentric map \cite[Def. 5.4; Thrm. 6.17]{vonDeylen} is a $C^2$-diffeomorphism from the standard simplex to $T_j$.
By subdividing the standard simplex using smaller simplexes we obtain subdivisions of the $T_j$ whose faces have bounded curvature (since the second derivatives of the finitely many barycentric maps are bounded).
If the subdivisions of all $T_j$ come from the same uniform subdivision of the standard simplex, then the ratios of  cell diameters and cell volumes will be uniformly bounded.
Thus, the triangulation $\Tau$ defined in \eqref{particular:tau}
will satisfy \eqref{tri1} and \eqref{tri2}.

Then, for some given (sufficiently small) $h>0,$  we define a triangulation $\Tau$ of $M$ by fixing a triangulation $\mathcal{S}$ of $N$ such that the maximal diameter 
of any element of $\mathcal{S}$ is bounded by $h/2$.
Then, we fix a $0 < \bar h < h$ such that  $T/\bar h$ is a natural number and the CFL condition is satisfied for the following triangulation:
\bel{particular:tau}
 \Tau :=\Big \{ \Big[\frac{(i-1)\bar h}{2}, \frac{i\bar h}{2}\Big] \times S \, : \, 1 \leq i \leq \frac{2T}{\bar h}, \ S \in \mathcal{S}\Big\}.
\ee
The temporal faces of this triangulation have bounded curvature such that \eqref{curvcond} is fulfilled, compare the first two items of Remark \ref{rem:triconds}.

In order to see that $\Tau$ also satisfies \eqref{trichange}, note that due to the construction of $\Tau$ for each cell $K$ the ``previous'' cell $K^-$ is just a translation of $K$ in time.
In particular, for any $K$ the faces $e_K^+$ and $e_K^-$ are (temporal) translations of each other.
Since all terms in the integrands in \eqref{trichange} depend smoothly on time we have
\[ \Big| \int_{e_K^-}(\psi_{\del^0 K^-} - \psi) i^*\Omega (u_{e_K^-})\\
 -  \int_{e_K^+} (\psi_{\del^0 K} - \psi) i^* \Omega (u_{e_K^-})\Big|  \leq \bar h \cdot h^{n+1} \leq h^{n+2}.\]
 Due to \eqref{tri2} we obtain
\[\sum_{K \in \Tau_j} \Big| \int_{e_K^-}(\psi_{\del^0 K^-} - \psi) i^*\Omega (u_{e_K^-})\\
 -  \int_{e_K^+} (\psi_{\del^0 K} - \psi) i^* \Omega (u_{e_K^-})\Big| \leq h^{2}.\]
 This completes the proof of existence of entropy solutions.
 
In order to prove the $L^1$ stability property \eqref{MTHM.1} we first need to determine $\gamma$ in Definition \ref{LR.4}.
We choose a test-function $\psi \in \mathcal{D}(\del M)$ with sufficiently smooth support and  a sequence $\{\phi_\varepsilon \}_{\varepsilon >0} \subset \mathcal{D}(M)$ such that
$\phi_\varepsilon \stackrel{\varepsilon\rightarrow 0}\longrightarrow \psi H_{\del M}$ in the sense of distributions,
where $H_{\del M}$ denotes the $n$-dimensional Hausdorff measure on $\del M$ induced by the metric chosen above.
Using $\phi_\varepsilon$ as test-function in the entropy inequality and letting $\varepsilon$ go to zero, we obtain
\bel{eq:bei}
 \int_{\del M} \psi\big( \Omega (u) - \Omega(u_B) - \del_u U(u_B) (\gamma - \omega (u_B))\big) \geq 0
\ee
for every admissible convex entropy pair $(U,\Omega)$.
By choosing $U(u)=\pm u $ and $\Omega (u)=\pm \omega (u)$ we obtain
\begin{equation}
 \int_{\del M} \psi\big( \pm \omega (u)  \mp  \gamma \big) \geq 0
\ee
which allows us to identify $\gamma=\omega(u)$.

Since $\psi$ was arbitrary we obtain
\bel{eq:biq}
 \int_{U} \big( \Omega (u) - \Omega(u_B) - \del_u U(u_B) (\omega(u)  - \omega (u_B))\big) \geq 0
\ee
for any open $U \subset \del M$.
Choosing Kruzkov's entropy pair equation \eqref{eq:biq} amounts to
\bel{eq:biqK}
 \int_{U}  \big( \sgn(u- \kappa) - \sgn(u_B - \kappa) \big) \big(\omega(u) - \omega(u_B) \big)  \geq 0.
\ee
By checking carefully several cases we obtain that \eqref{eq:biqK} and a similar inequality for $(v,v_B)$ 
imply the following inequality for traces on $\del M:$
\bel{eq:boundary}
 \Omegabf(u,v) \geq - | u_B - v_B | A_B
\ee
for any  $A_B \in L^\infty \Lambda^n (\del M)$ such that
$|\del_ u \omega |_{\del M}| \leq A_B$ in $ \del M \times \mathbb{R}$. 
The existence of one such $A_B$ follows since $\omega$ grows at most linearly.

By the classical doubling of variables method we obtain (in the interior of $M$)
\bel{eq:weakK}
 d \Omegabf(u,v) \leq 0 \quad \text{ weakly}.
\ee
Let us now pick hypersurfaces with $\Hcal, \Hcal'$  such that $\Hcal'$ lies in the future of $\Hcal$ with a corresponding function $F: \Hcal \times [0,1] \rightarrow M$.
Due to the non-degeneracy of $DF(\del_s)$ we can associate with each $x \in \operatorname{Im}(F)$ a ``time'' $s=s(x)$.
For each $\varepsilon>0$ we denote by 
$\chi_\varepsilon:M \rightarrow [0,\infty)$ the map given by
\[ \chi_\varepsilon (x) = \left\{ \begin{array}{ccc}
                              0, && x \not\in \operatorname{Im}(F),
\\
                             \frac{1}{\varepsilon} s(x), && x \in \operatorname{Im}(F),\ s(x) \leq \varepsilon,
\\
                             1, && x \in \operatorname{Im}(F),\ \varepsilon <  s(x) < 1-\varepsilon,
\\
                             \frac{1}{\varepsilon} (1 - s(x)), && x \in \operatorname{Im}(F),\ s(x)  \geq 1 -\varepsilon.
                             \end{array}\right.
\]
                             
Similarly we denote for any $\delta >0$ by $\psi_\delta \in \mathcal{D}(M)$ a  function satisfying
\[ \psi_\delta|_{\del M} \equiv 0, \quad \psi_\delta (x)=1 \quad \text{ for all } x \in M \, : \, \operatorname{dist}(x, \del M) > \delta.\]
This definition makes sense, once we have fixed a Riemannian metric on $M$.
Using $\phi=\chi_\varepsilon \cdot \psi_\delta$ as test-function for \eqref{eq:weakK} and letting $\delta \rightarrow 0$ we obtain
\begin{multline}\label{eq:pl1c}
 \frac{1}{\varepsilon} \int_{F(\Hcal \times [0,\varepsilon])} \Omegabf(u,v) \wedge F_*(d s) - \frac{1}{\varepsilon} \int_{F(\Hcal \times [1-\varepsilon,1])} \Omegabf(u,v) \wedge F_*(d s) 
 - \int_{\Bcal} \chi_\varepsilon \Omegabf(u,v) \geq 0,
\end{multline}
where $F_*$ denotes the push-forward along $F$.
Inserting \eqref{eq:boundary} into \eqref{eq:pl1c} implies
\begin{multline}\label{eq:pl1cab}
 \frac{1}{\varepsilon} \int_{F(\Hcal \times [1-\varepsilon,1])} \Omegabf(u,v) \wedge F_*(d s) 
 \leq \frac{1}{\varepsilon} \int_{F(\Hcal \times [0,\varepsilon])} \Omegabf(u,v) \wedge F_*(d s)
 + \int_{\Bcal } |u_B - v_B| A_B.
\end{multline}
This implies \eqref{MTHM.1} as soon as we can show that the $\varepsilon \rightarrow 0$ limit is well-defined.

To this end we define
\[ a_n :=  2^{n} \int_{F(\Hcal \times [0, 2^{-n}])} \Omegabf(u,v) \wedge F_*(d s), \quad b_n:= \int_{F(\Hcal \times [0, 2^{-n}]) \cap \del M } |u_B - v_B| A_B.\]
Then, choosing suitable test functions in \eqref{eq:weakK} we get
$
 a_{n+1} \geq a_n - b_n
$
and $|b_n| = \mathcal{O}(2^{-n}),$
which means that 
$\int_{\Hcal} \Omegabf(u,v) = \lim_{n \rightarrow \infty} a_n$ exists.
Similarly it can be shown that
\begin{equation}
  \frac{1}{\varepsilon} \int_{F(\Hcal \times [1-\varepsilon,1])} \Omegabf(u,v) \wedge F_*(d s)  \stackrel{\varepsilon \rightarrow 0}\longrightarrow \int_{\Hcal'} \Omegabf(u,v) ,
\ee
which completes the proof of \eqref{MTHM.1}.
The proof of \eqref{MTHM.23} is analogous.
 \end{proof}


\section*{Appendix}

We provide here examples showing that $\del^- M \not=\emptyset$ does not follow from the hyperbolicity condition 
and that
the existence of a foliation does not follow from the hyperbolicity condition and $\del^- M \not=\emptyset$.

\paragraph{Example 1} The assumption $(\del M)^- \not= \emptyset$ cannot be dropped, as it is essential for our finite volume scheme
and does not follow from the other properties of the problem under consideration, as can be seen in the following example. 

We consider the following compact and bounded manifold with boundary:
$$
R= \{ (x,y) \in \RR^2 : 1 \leq x^2 + y^2 \leq 2\}
$$
with the differentiable structure as a subset of $\RR^2$ and  the orientation determined by saying that $dx \wedge dy$ is positive.
Let us study the conservation law associated to
$$
\omega(x,y,u):= ux\,dx + uy\,dy.
$$
So, $\del_u \omega(x,y,u)=x\, dx + y\,dy,$ i.e., $\del_u \omega$ is  independent of $u$ and $\omega$  has at most linear growth.
Furthermore, $\omega$ is geometry compatible, since $(d\omega)(x,y,\bar u)=0$ holds for every $\bar u \in \RR$.
An observer can be defined by  $T(x,y):= y \, dx - x \, dy \in C^\infty\Lambda^1R,$ since
$$
T \wedge \del_u \omega= (x^2+y^2)\, dx \wedge dy 
$$
is a positive $2-$form. Hence, $\omega$ satisfies the global hyperbolicity property.
For every $(x,y) \in \del R$ we have
\[ T_{(x,y)} \del R = \{ \lambda(-y\, \del_x + x \, \del_y) | \lambda \in \RR\}.\]
Therefore normal $1$-forms have the form $N=\mu (x \,dx + y \, dy)$ with $\mu \in \RR$ and hence
$N \wedge \del_u \omega =0$, 
due to the anti-symmetry of the $\wedge$-operator on $1$-forms.
This shows that there is no spacelike, and in particular no inflow, part of the boundary.

\paragraph{Example 2} The existence of a  foliation consisting of homeomorphic, spacelike slices as in \eqref{foliation} is not a consequence of the other assumptions made on the problem under consideration.
We consider $M= \{ (x,y) \in \RR^2 \, | \, (x,y) \in [0,3]^2 \setminus (1,2)^2\}$ with the differentiable structure and orientation 
as a subset of $\RR^2$.
We fix a geometry compatible flux field and an observer that are given by  $\omega = - u \, dx$ and  $T= dy $.
The inflow boundary is $\del^- M = [0,3] \times \{0\} \cup [1,2] \times \{2\} \not =\emptyset$.
The spacetime $M$ cannot admit a foliation in the sense of \eqref{foliation}, i.e. one whose slices are spacelike with respect to $\omega$ and homeomorphic.
The reason is that whichever part of the inflow boundary $\del^- M$ we pick as $\Hcal_0$ the topological structure of $[0,T] \times \Hcal_0$ can never be that of $M$, i.e., connected but not simply connected.


\section*{Acknowledgments} 

The second author (PLF) was partially supported by the Innovative Training Network (ITN) grant 642768 (ModCompShock). Part of this work was done when the second author was visiting the Courant Institute for Mathematical Sciences, New York University.


\end{document}